\numberwithin{equation}{section}
\theoremstyle{plain}
\newtheorem{theorem}{Theorem}[section]
\newtheorem{maintheorem}{Theorem}
\newtheorem{proposition}[theorem]{Proposition}
\newtheorem{lemma}[theorem]{Lemma}
\theoremstyle{remark}
\newtheorem{remark}[theorem]{Remark}
\newtheorem{example}[theorem]{Example}
\theoremstyle{definition}
\newcommand{\HH}{\mathcal{H}}
\newcommand{\PP}{\mathcal{P}}
\newcommand{\R}{\mathbb{R}}
\newcommand{\N}{\mathbb{N}}
\newcommand{\iii}{\mathtt{i}}
\newcommand{\jjj}{\mathtt{j}}
\newcommand{\eps}{\varepsilon}
\newcommand{\fii}{\varphi}
\newcommand{\roo}{\varrho}
\newcommand{\ualpha}{\overline{\alpha}}
\newcommand{\lalpha}{\underline{\alpha}}
\newcommand{\CL}{C_{\textnormal{left}}}
\newcommand{\CR}{C_{\textnormal{right}}}
\newcommand{\TL}{T_{\textnormal{left}}}
\newcommand{\TR}{T_{\textnormal{right}}}
\renewcommand{\epsilon}{\varepsilon}
\newcommand{\la}{\underline{\alpha}}
\newcommand{\ua}{\overline{\alpha}}
\newcommand{\II}{\mathcal{I}}
\newcommand{\JJ}{\mathcal{J}}
\newcommand{\ii}{\mathtt{i}}
\newcommand{\jj}{\mathtt{j}}
\newcommand{\iin}[1]{\ii|_{#1}}
\newcommand{\jjn}[1]{\jj|_{#1}}
\newenvironment{labeledlist}[2][\unskip]
{ 
  
  \begin{enumerate} }
{ \end{enumerate} }
\DeclareMathOperator{\udimm}{\overline{dim}_M}
\DeclareMathOperator{\ldimm}{\underline{dim}_M}
\DeclareMathOperator{\dimh}{dim_H}
\DeclareMathOperator{\dima}{dim_A}
\DeclareMathOperator{\cdima}{\mathcal{C}dim_A}
\DeclareMathOperator{\cdimh}{\mathcal{C}dim_H}
\DeclareMathOperator{\dist}{dist}
\DeclareMathOperator{\diam}{diam}
\DeclareMathOperator{\diag}{diag}
\DeclareMathOperator{\proj}{proj}
\DeclareMathOperator{\spt}{spt}
\DeclareMathOperator{\length}{length}
\begin{document}

\title{Rigidity of quasisymmetric mappings on self-affine carpets}


\author{Antti K\"aenm\"aki}
\address[Antti K\"aenm\"aki]{
        Department of Mathematics and Statistics \\
        P.O.\ Box 35 (MaD) \\
        FI-40014 University of Jyv\"askyl\"a \\
        Finland}
\email{antti.kaenmaki@jyu.fi}

\author{Tuomo Ojala}
\address[Tuomo Ojala]{
        Department of Mathematics and Statistics \\
        P.O.\ Box 35 (MaD) \\
        FI-40014 University of Jyv\"askyl\"a \\
        Finland}
\email{tuomo.ojala@gmail.com}

\author{Eino Rossi}
\address[Eino Rossi]{
        Department of Mathematics and Statistics \\
        P.O.\ Box 35 (MaD) \\
        FI-40014 University of Jyv\"askyl\"a \\
        Finland}
\email{eino.rossi@gmail.com}

\subjclass[2000]{Primary 28A80; Secondary 37F35, 30C62, 30L10.}
\keywords{Quasisymmetric mapping, conformal dimension, self-affine carpet}
\date{\today}

\begin{abstract}
  We show that the class of quasisymmetric maps between horizontal self-affine carpets is rigid. Such maps can only exist when the dimensions of the carpets coincide, and in this case, the quasisymmetric maps are quasi-Lipschitz. We also show that horizontal self-affine carpets are minimal for the conformal Assouad dimension.
\end{abstract}

\maketitle

\section{Introduction} \label{sec:intro}

We consider the following two general questions:
\begin{itemize}
  \item[(1)] By understanding the fine structure of sets, is it possible to say anything about quasisymmetic mappings between the sets?
  \item[(2)] What kind of sets are minimal for the conformal dimension?
\end{itemize}
In the class of horizontal self-affine carpets we can answer both of the questions. Our method to prove the results builds on the analysis of weak tangent sets and mappings. This is done in general metric spaces and therefore, should hold an independent interest and also provide a framework for possible further applications.

If $(X,d)$ and $(Y,\roo)$ are metric spaces and $\eta \colon [0,\infty) \to [0,\infty)$ is a homeomorphism, then a homeomorphism $f \colon X \to Y$ is \emph{$\eta$-quasisymmetric} if
\begin{equation*}
  \frac{\roo(f(x),f(y))}{\roo(f(x),f(z))} \le \eta\biggl( \frac{d(x,y)}{d(x,z)} \biggr)
\end{equation*}
for all $x,y,z \in X$ with $x \ne z$. Quasisymmetric mappings are a non-trivial generalization of bi-Lipschitz mappings. While bi-Lipschitz maps shrink or expand the diameter of a set by no more than a multiplicative factor, quasisymmetric maps satisfy the weaker geometric property that they preserve the relative sizes of sets: if two sets $A$ and $B$ have diameters $t$ and are no more than distance $t$ apart, then the ratio of their sizes changes by no more than a multiplicative constant.

The \emph{Assouad dimension} of a set $E \subset X$, denoted by $\dima(E)$, is the infimum of all $t$ satisfying the following: there exists a constant $C \ge 1$ such that each set $E \cap B(x,R)$ can be covered by at most $C(r/R)^{-t}$ balls of radius $r$ centered at $E$ for all $0<r<R$. The \emph{conformal Assouad dimension} of $E$ is
\begin{equation*}
  \cdima(E) = \inf\{ \dima(E') : E' \text{ is a quasisymmetric image of } E \}.
\end{equation*}
It is worth emphasizing that the codomains of the quasisymmetric mappings used in the definition can be any metric spaces. A set $E$ is \emph{minimal} for the conformal Assouad dimension if $\dima(E) = \cdima(E)$. We remark that conformal dimension and minimality can similarly be defined also for other set dimensions.

Concerning the question (1), Bonk and Merenkov \cite[Theorem 1.1]{BonkMerenkov2013} have recently shown that every quasisymmetric self-map on the standard $\tfrac13$-Sierpi\'nski carpet is an isometry. The \emph{$\tfrac13$-Sierpi\'nski carpet} is a planar self-similar set satisfying the open set condition obtained by a repetitive process where, at each step, the cube is divided in nine subcubes and the middle one is removed. Concerning the question (2), Mackay \cite[Theorem 1.4]{Mackay2011} has shown that a Gatzouras-Lalley carpet $E$ is minimal when its projection onto the horizontal coordinate is a line segment; otherwise $\cdima(E)=0$. \emph{Gatzouras-Lalley carpets} are self-affine sets constructed by a repetitive process in which, using the same pattern at each step, the rectangle is first partitioned into vertical tubes and then from each tube a collection of disjoint subrectangles is chosen in such a way that the vertical length of each subrectangle is strictly smaller than the horizontal length which equals the width of the tube.

Theorems \ref{thm:horizontal_carpet} and \ref{thm:conformal_dimensio} below give an answer for both of the questions in the class of horizontal self-affine carpets. A set in this class is constructed by a repetitive process in which, using the same pattern at each step, in a given rectangle, we choose a collection of disjoint subrectangles having width longer than height such that every vertical line going through the rectangle intersects at least two such subrectangles. The precise definition and the proofs of the results are given in \S \ref{sec:horizontal-carpets}.

\begin{maintheorem} \label{thm:horizontal_carpet}
  If $E$ and $F$ are horizontal self-affine carpets, then any quasisymmetric mapping $f \colon E \to F$ is quasi-Lipschitz.
\end{maintheorem}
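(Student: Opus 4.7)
The plan is to argue via weak tangents, exploiting the horizontal structure of both carpets in a rigidity argument.

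First, I would establish the structure of weak tangents of horizontal self-affine carpets. The defining property --- every vertical line through the ambient rectangle meets at least two subrectangles, each wider than tall --- allows one, upon iterating the IFS and rescaling at each step by the reciprocal of the horizontal contraction, to obtain in the limit a set whose intersection with a bounded vertical strip contains a translate of $\R\times\{0\}$. Thus at every $x\in E$ there is a weak tangent $T_x E$ containing a full horizontal line, and the same statement holds for $F$.

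Second, I would invoke the weak tangent transfer principle for quasisymmetric maps developed in the earlier sections of the paper: a quasisymmetric map $f\colon E\to F$ with distortion $\eta$ produces, along any admissible sequence of rescalings at a basepoint, an induced $\eta$-quasisymmetric tangent map $g\colon T_xE\to T_{f(x)}F$. Both source and target of $g$ contain horizontal lines by the first step.

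Third, and this is the crux, I would argue that $g$ restricted to a horizontal line of $T_x E$ must have image lying on a horizontal line of $T_{f(x)}F$. The image is a quasisymmetric embedding of $\R$, hence a one-dimensional arc; but the tangent of a horizontal carpet contains no vertical line segments and more generally no one-dimensional arcs with nontrivial vertical component, because the width-greater-than-height condition forces the vertical direction to behave like a uniformly Cantor-like set of dimension strictly less than one. This forces $g$ to send horizontal fibers to horizontal fibers. Combined with the matching horizontal Assouad structure of the two tangents, the restriction becomes quasi-Lipschitz on these fibers; a vertical analogue, transported through the self-affine rescaling, then propagates the estimate to yield the quasi-Lipschitz property for $f$ itself.

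The step I expect to be the hardest is the third one: rigorously ruling out vertical excursions of the quasisymmetric image of a horizontal line, and then upgrading the quasisymmetric estimate on fibers to a quasi-Lipschitz one. This rigidity is only tenable because the asymmetry in the horizontal self-affine construction --- strict inequality width $>$ height, combined with the two-subrectangle covering condition --- constrains the Assouad dimensions in the two coordinate directions in a way that is incompatible with wild quasisymmetric curves. A secondary difficulty is making the pointwise weak tangent estimates sufficiently uniform in the basepoint $x$ to obtain the global quasi-Lipschitz conclusion rather than a merely pointwise one; this will likely rely on the homogeneity of the self-affine structure so that every point sees essentially the same family of tangents.
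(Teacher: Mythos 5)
Your overall architecture matches the paper's: pass to weak tangents, show the tangents are fibered by horizontal lines over Cantor-like vertical slices, deduce that a tangent map sends fibers to fibers, and transfer the rigidity back to $f$. The first two steps correspond to Theorem \ref{thm:BK_general} and Lemma \ref{lem:quasisymmetric} (though note the tangents have the form $(-\infty,w]\times \CL\cup[w,\infty)\times\CR$, so generically they carry half-lines rather than full lines, which is why the paper must decompose the tangent into several fibered pieces before applying any rigidity). Your third step, however, has two genuine gaps. Knowing that the tangent map $g$ sends horizontal fibers to horizontal fibers does not by itself make $g$ bi-Lipschitz on those fibers: a quasisymmetric self-map of $\R$ can be a genuine power map, and ``matching horizontal Assouad structure'' of the two tangents places no obstruction on this. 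The mechanism the paper uses is the fibered-space rigidity of Le Donne and Xie (Lemmas \ref{lem:FiberedBiLipConstants} and \ref{lem:manyfiberedcomponents}): the coexistence of parallel fibers at arbitrarily small positive distances (condition \ref{F4}) with unbounded geodesic fibers forces a fiber-preserving quasisymmetric map to be a quasi-similarity, and a normalization at one pair of points then pins down a bi-Lipschitz constant depending only on $\eta$. Some version of this interaction between the fiber direction and the transverse Cantor direction is indispensable, and nothing in your sketch supplies it.

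The second gap is in the transfer back from ``tangent maps are bi-Lipschitz'' to ``$f$ is quasi-Lipschitz'', and the difficulty is not the one you identify (uniformity over basepoints). If $f$ fails to be quasi-Lipschitz along pairs $(x_i,y_i)$ with $d(x_i,y_i)\to0$, the obvious blow-up at $x_i$ with scale $d(x_i,y_i)$ on the domain and $\roo(f(x_i),f(y_i))$ on the target normalizes away exactly the defect you want to observe, so the limit map reveals nothing (this is the content of Remark \ref{rem:weak-tangent}). The paper's Lemma \ref{lem:helperLemma} resolves this by producing, via a chain of intermediate points, a triplet $(a,b,c)$ with $\eps D^{-1} d(a,c) < d(a,b) \le \eps d(a,c)$ on which the H\"older defect survives at a definite relative scale $\eps^{1+\kappa/2}$ versus $\eps$; only the blow-up at scale $d(a,c)$ then yields a normalized tangent map contradicting the uniform bi-Lipschitz constant $L=L(\eta)$ of all tangent maps satisfying \eqref{eq:1-eta-qs}. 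Without the triplet construction and without uniformity of $L$ across all weak tangent maps, the contradiction does not close, so as written your argument would at best give a pointwise statement about tangents rather than the quasi-Lipschitz property of $f$.
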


A mapping $f \colon X \to Y$ is \emph{quasi-Lipschitz} if
\begin{equation*} 
  \frac{\log\roo(f(x),f(y))}{\log d(x,y)} \to 1
\end{equation*}
uniformly as $d(x,y) \to 0$. The class of quasi-Lipschitz mappings is strictly more general than the class of bi-Lipschitz mappings. For example, $f\colon [0,1]\to\R$ defined by $f(0)=0$ and $f(x)=x\log x$ for all $x\in(0,1]$ is quasi-Lipschitz, but not Lipschitz. In Lemma \ref{lem:quasiliphausdorff}, we show that quasi-Lipschitz mappings preserve the Hausdorff dimension and the upper and lower Minkowski dimensions. Therefore, two horizontal self-affine carpets either have the same dimension or there does not exist a quasisymmetric mapping between them. This is in a huge contrast to the self-similar case. Wang, Wen, and Zhu \cite{WangWenZhu2010} have shown that all self-similar sets satisfying the strong separation condition are quasisymmetrically equivalent. 

\begin{maintheorem} \label{thm:conformal_dimensio}
  Horizontal self-affine carpets are minimal for the conformal Assouad dimension.
\end{maintheorem}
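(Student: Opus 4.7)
The plan is to establish $\cdima(E) = \dima(E)$ by constructing a suitable product-like weak tangent of $E$ and transferring it under any quasisymmetric map $f \colon E \to E'$. Write $s = \dima(E)$; the goal is to prove $\dima(E') \ge s$ for arbitrary $E'$.

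\textbf{Step 1 (product weak tangent).} The first, and main, step is to produce a weak tangent $T$ of $E$ of the explicit form $T = [0,1] \times F$, where $F$ is an Ahlfors regular ``slice'' set with $\dima(F) = s - 1$, so that $\dima(T) = s$. The defining horizontal hypothesis---every vertical line through the base rectangle meets at least two chosen subrectangles---is precisely what makes such a construction possible. I would iterate the IFS along a carefully chosen sequence of cylinder words whose projections fill the horizontal direction: by combining cylinders corresponding to vertically stacked pieces at each level and rescaling so that one step of the iteration becomes a ``horizontal unit'', the horizontal factor fills $[0,1]$ in the Hausdorff limit. Simultaneously, the vertical component converges along a Bernoulli-type subsequence to a self-similar set $F$. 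Matching $1 + \dima(F)$ with $\dima(E)$ requires a thermodynamic argument selecting a Gibbs-type measure that realises the Assouad dimension at a good point of $E$.

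\textbf{Step 2 (Mackay--Tyson transfer).} Given a quasisymmetric $f \colon E \to E'$, the standard Gromov--Hausdorff compactness of quasisymmetric maps with uniformly bounded distortion allows us to push the sequence of rescalings on $E$ to a corresponding sequence on $E'$, and to extract a subsequence converging to a weak tangent $T'$ of $E'$ together with a limit quasisymmetric map $g \colon T \to T'$ whose distortion function is controlled by that of $f$. Since Assouad dimension does not increase under passage to weak tangents, $\dima(T') \le \dima(E')$.

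\textbf{Step 3 (minimality of the product).} It remains to show $\dima(T') \ge s$. The product $T = [0,1] \times F$ contains the family of horizontal segments $I_y = [0,1] \times \{y\}$ for $y \in F$, so the images $g(I_y)$ are bounded-turning curves in $T'$ whose diameters are bounded below. A covering/combinatorial modulus argument then yields $\dima(T') \ge 1 + \dima(F) = s$: at scale $r$ each $g(I_y)$ requires $\gtrsim r^{-1}$ balls of radius $r$ to cover, and the transverse set $F$ provides $\gtrsim r^{-(s-1)}$ essentially disjoint such curves, giving an $r^{-s}$ lower bound for the covering numbers of $T'$. Combined with Step~2 this yields $\dima(E') \ge s$. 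The main obstacle is Step~1: without the horizontal hypothesis the horizontal projection of $E$ could retain gaps at all scales, preventing the appearance of a genuine $[0,1]$ factor in any weak tangent, and producing $F$ of precisely the right dimension $s - 1$ will require a careful choice of Gibbs measure on the symbolic space of $E$ together with fine Assouad-dimension control along cylinders. Once $T$ is in hand, Steps~2 and~3 rest on the Mackay--Tyson framework and on standard curve-family estimates, respectively.
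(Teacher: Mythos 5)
Your overall strategy -- find a product-like weak tangent of $E$ attaining the full Assouad dimension, show it is minimal, and transfer minimality back via the fact that conformal dimension does not increase under passage to weak tangents -- is the same as the paper's. But the two places where you locate the work contain genuine gaps. The crux is your Step 1, and the route you propose for it (a Gibbs/thermodynamic construction producing an Ahlfors regular slice $F$ with $\dima F = s-1$) is not how this can be done and is likely to fail: the Assouad dimension of a self-affine carpet is governed by extremal fibers and is in general not realised by any Gibbs-type measure, and there is no reason a single weak tangent should be Ahlfors regular. The paper instead splits Step 1 into two independent pieces: (i) a structure theorem (Theorem \ref{thm:BK_general}) showing that \emph{every} weak tangent of a horizontal carpet has the form $(-\infty,w]\times\CL\cup[w,\infty)\times\CR$ with $\CL,\CR$ porous and uniformly perfect -- this is where condition \ref{H:projection} enters; and (ii) a completely general Furstenberg microset argument (Proposition \ref{prop:microAssouad}, following Bishop--Peres) showing that \emph{any} compact set admits a microset, hence a weak tangent $W$, with $\dimh(W\cap Q)\ge\dima E$. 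Note the dimension attained is the Hausdorff dimension of the tangent, not the Assouad dimension of a slice; this is exactly what the minimality theorem for products needs, and it sidesteps your regularity requirements entirely. This microset idea is the missing ingredient in your sketch.

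Your Step 3 also has a gap as written. The heuristic "each curve $g(I_y)$ needs $\gtrsim r^{-1}$ balls and there are $\gtrsim r^{-(s-1)}$ essentially disjoint such curves, hence $r^{-s}$ balls in total" is not valid: a quasisymmetric map can bring distinct fibers arbitrarily close together relative to their diameters, so a single $r$-ball in $T'$ may meet very many of the curves and the covering numbers do not multiply. This is precisely what the modulus-of-curve-families argument (Tyson; \cite[Theorem 4.1.11]{MackayTyson2010}) is designed to repair: it gives a lower bound for the \emph{Hausdorff} measure of any quasisymmetric image of $[0,1]\times F$ in terms of the Hausdorff measure of $F$, and one then concludes via $\cdimh\le\cdima$ rather than by bounding $\dima T'$ directly. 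You name combinatorial modulus but then substitute the incorrect counting for it; the final chain should read
\begin{equation*}
  \dima E \le \dimh(W\cap Q) = \cdimh(W\cap Q) \le \cdima(W\cap Q) \le \cdima E,
\end{equation*}
where the middle equality is the Tyson-type minimality of the fibered tangent and the last inequality is Mackay's weak tangent comparison (your Step 2, which is fine).
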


In Example \ref{ex:gap}, we show that a self-affine carpet can be minimal even if the projection onto the horizontal coordinate is not a line. Furthermore, in Remark \ref{rem:projection-interval}, we point out that, to obtain Theorem \ref{thm:conformal_dimensio}, instead of the vertical line condition it suffices to assume that the projection of the carpet onto the horizontal coordinate is a line segment. Therefore, under the strong separation condition, Theorem \ref{thm:conformal_dimensio} strictly generalizes the result of Mackay \cite[Theorem 1.4]{Mackay2011}.

The proof of Theorem \ref{thm:horizontal_carpet} has three essential steps. At first, in Theorem \ref{thm:quasilip}, we show that a quasisymmetric mapping is quasi-Lipschitz provided that its quasisymmetric weak tangent maps are bi-Lipschitz. The second step is to find a geometric condition for the weak tangent sets under which quasisymmetric weak tangent maps are bi-Lipschitz. This is done in Lemma \ref{lem:manyfiberedcomponents} by modifying the result of Le Donne and Xie \cite[Theorem 1.1]{LeDonneXie2015} for our purposes. We show that quasisymmetric maps between finite unions of fibered spaces are bi-Lipschitz. It is worth emphasizing that both of these results hold in general metric spaces. Finally, in Theorem \ref{thm:BK_general}, we show that the weak tangent sets of horizontal self-affine carpets are finite unions of fibered spaces. This generalizes the result of Bandt and K\"aenm\"aki \cite[Theorem 1]{BandtKaenmaki2013}. It is also an essential ingredient in the proof of Theorem \ref{thm:conformal_dimensio}. To prove Theorem \ref{thm:conformal_dimensio}, it therefore remains to show that any compact set, whose weak tangents are minimal, is minimal for the conformal Assouad dimension.

\section{Convergence of metric spaces and mappings}\label{sec:MappingPackages}

The purpose of this section is to introduce weak tangents for metric spaces and quasisymmetric mappings. To that end, let us start fixing some notation. We say that a metric space $(X,d)$ is \emph{doubling} with a constant $N \in \N$ if any closed ball $B_X(x,r)$ with center $x \in X$ and $r>0$ can be covered by $N$ balls of radius $r/2$. If the underlying metric space is clear from the context, we write $B(x,r)$ instead of $B_X(x,r)$. Recall that if a metric space $(X,d)$ is doubling, then, according to the Assouad embedding theorem, for any $0<\alpha<1$, $(X,d^\alpha)$ can be mapped onto a subset of some Euclidean space by a bi-Lipschitz mapping.
We say that a metric space $(X,d)$ is \emph{uniformly perfect} with a constant $D \ge 1$ if for all $x\in X$ and $r>0$ we have $B(x,r) \setminus B(x,r/D) \ne \emptyset$ whenever $X \setminus B(x,r) \ne \emptyset$.

We continue by recalling some of the definitions from the book of David and Semmes \cite[\S 8]{DavidSemmes1997}. Let $(F_i)$ be a sequence of non-empty closed subsets of $\R^d$. We say that $F_i$ \emph{converges} to a nonempty closed $F \subset \R^d$ if
\begin{equation*}
  \lim_{i \to \infty} \sup_{x \in F_i \cap B(0,R)} \dist(x,F) = 0
\end{equation*}
and
\begin{equation*}
  \lim_{i \to \infty} \sup_{y \in F \cap B(0,R)} \dist(y,F_i) = 0
\end{equation*}
for all $R>0$. In this case, if $(X,d)$ is a metric space, and $\fii_i \colon F_i \to X$ and $\fii \colon F \to X$, then we say that $\fii_i$ \emph{converges} to $\fii$ if for each sequence $(x_i)$ in $\R^d$ for which $x_i \in F_i$ for all $i$ and $\lim_{i \to \infty} x_i = x \in F$ we have
\begin{equation*}
  \lim_{i \to \infty} \fii_i(x_i) = \fii(x).
\end{equation*}
Note that the limit function $\fii$ is unique.

Recall that a \emph{pointed metric space} is a triple $(X,d,p)$, where $(X,d)$ is a complete doubling metric space and $p \in X$ is a fixed \emph{basepoint}. Suppose that $(X,d,p)$ and $(X_i,d_i,p_i)$ are pointed metric spaces for all $i$ such that all the metric spaces are doubling with the same constant. By the Assouad embedding theorem, choose $0<\alpha\le 1$ and bi-Lipschitz embeddings $h_i \colon (X_i,d_i^\alpha) \to (\R^d,|\cdot|)$ and $h \colon (X,d^\alpha) \to (\R^d,|\cdot|)$ so that $h_i(p_i)=0$ for all $i$ and $h(p)=0$. We say that $(X_i,d_i,p_i)$ \emph{converges} to $(X,d,p)$ if, for some such choices $h_i$ and $h$, $h_i(X_i)$ converges to $h(X)$ in $\R^d$ and $(x,y) \mapsto d_i(h_i^{-1}(x),h_i^{-1}(y))$ defined on $h_i(X_i) \times h_i(X_i)$ converges to $(x,y) \mapsto d(h^{-1}(x),h^{-1}(y))$ on $h(X) \times h(X)$. By \cite[Lemma 8.12]{DavidSemmes1997}, the limit $(X,d,p)$ is unique up to an isometry that respects the basepoint. Observe also that, by \cite[Lemma 8.13]{DavidSemmes1997}, a sequence of pointed metric spaces has a converging subsequence.

A \emph{mapping package} is a triplet $((X,d,p),(Y,\roo,q),f)$ consisting of two pointed metric spaces and a mapping $f \colon X \to Y$ for which $f(p)=q$. Suppose that $\PP = ((X,d,p),(Y,\roo,q),f)$ and $\PP_i = ((X_i,d_i,p_i),(Y_i,\roo_i,q_i),f_i)$ are mapping packages for all $i$. We say that $\PP_i$ \emph{converges} to $\PP$ if $(X_i,d_i,p_i)$ converges to $(X,d,p)$, $(Y_i,\roo_i,q_i)$ converges to $(Y,\roo,q)$, and $g_i \circ f_i \circ h_i^{-1}$ converges to $g \circ f \circ h^{-1}$, where $h,g,h_i,g_i$ are the Assouad embeddings of $X,Y,X_i,Y_i$, respectively, as above. By \cite[Lemma 8.21]{DavidSemmes1997}, the limit $\PP$ is unique up to isometries that respect the basepoints. Observe also that, by \cite[Lemma 8.22]{DavidSemmes1997}, if $\PP_i = ((X_i,d_i,p_i),(Y_i,\roo_i,q_i),f_i)$ are mapping packages for all $i$ such that the mappings $f_i$ are equicontinuous and uniformly bounded on bounded sets and all the metric spaces are doubling with the same constant, then $\PP_i$ has a converging subsequence. Recall that $\{ f_i \}$ is \emph{equicontinuous on bounded sets} if for every $R>0$ and $\eps>0$ there is $\delta>0$ such that
\begin{equation*}
  \roo_i(f_i(x),f_i(y)) < \eps
\end{equation*}
for all $i$ and for all $x,y \in B_{X_i}(p_i,R)$ with $d_i(x,y)<\delta$. Furthermore, $\{ f_i \}$ is \emph{uniformly bounded on bounded sets} if
\begin{equation*}
  \sup_i\sup_{x_i \in B_{X_i}(p_i,R)} \roo_i(f_i(x_i),q_i) < \infty
\end{equation*}
for all $R > 0$.

\begin{lemma} \label{lem:quasisymmetric}
  Suppose that $\PP_i = ((X_i,d_i,p_i), (Y_i,\roo_i,q_i) , f_i)$, where $f_i\colon X_i\to Y_i$ is $\eta$-quasisymmetric, is a mapping package for all $i \in \N$ such that all the metric spaces are doubling with the same constant. Assume further that there exists a constant $C \ge 1$ and a sequence $(w_i)$ such that $w_i \in X_i$,
  \begin{equation*}
    1/C \leq  d_i(p_i, w_i) \leq C, \quad \text{and} \quad 1/C \leq \roo_i( q_i, f_i(w_i)) \leq C
  \end{equation*}
  for all $i \in \N$. Then, after passing to a subsequence, the mapping packages $\PP_i$ converges to a mapping package $((X,d,p), (Y,\roo,q) , f)$, where $f\colon X\to Y$ is $\eta$-quasisymmetric. Moreover, there exists $w \in X$ such that
  \begin{equation*}
    1/C \leq  d(p, w) \leq C, \quad \text{and} \quad 1/C \leq \roo( q, f(w)) \leq C.
  \end{equation*}
\end{lemma}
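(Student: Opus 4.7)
The plan is to reduce this to the compactness result \cite[Lemma 8.22]{DavidSemmes1997}, which yields a converging subsequence from any family $\PP_i$ whose maps $f_i$ are equicontinuous and uniformly bounded on bounded sets, and then to verify that the limit inherits $\eta$-quasisymmetry and admits a normalizing point $w$. The purpose of the hypotheses on $(w_i)$ is precisely to prevent the maps $f_i$ from degenerating in either direction.

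First I would establish uniform boundedness on $B_{X_i}(p_i,R)$. Applying the quasisymmetry inequality to the triple $(p_i, x, w_i)$ and using the normalization gives
\begin{equation*}
  \roo_i(q_i, f_i(x)) \le \eta\!\left( \frac{d_i(p_i, x)}{d_i(p_i, w_i)} \right) \roo_i(q_i, f_i(w_i)) \le C\eta(CR).
\end{equation*}
For equicontinuity on $B_{X_i}(p_i,R)$, given $x, y$ in this ball, note that $d_i(x, p_i) + d_i(x, w_i) \ge d_i(p_i,w_i) \ge 1/C$, so at least one of these distances is at least $1/(2C)$. Choosing the triple $(x, y, p_i)$ or $(x, y, w_i)$ accordingly and combining the quasisymmetry inequality with the bound just derived and a triangle inequality yields $\roo_i(f_i(x), f_i(y)) \le \eta(2C\, d_i(x,y)) \cdot M$ for some $M = M(R, C, \eta)$ independent of $i$. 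Since $\eta(0) = 0$, equicontinuity follows.

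Having verified the hypotheses, \cite[Lemma 8.22]{DavidSemmes1997} produces a subsequence along which $\PP_i$ converges to some $\PP = ((X,d,p),(Y,\roo,q),f)$. Because $d_i(p_i, w_i) \le C$, the Assouad-embedded points $h_i(w_i)$ form a bounded sequence in $\R^d$, so passing to a further subsequence gives $h_i(w_i) \to h(w)$ for some $w \in X$. The convergence of the metrics on the embedded spaces then forces $1/C \le d(p, w) \le C$, and because the mapping package converges we have $g_i(f_i(w_i)) \to g(f(w))$, whence $1/C \le \roo(q, f(w)) \le C$.

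To finish, for any $x, y, z \in X$ with $x \ne z$ I would approximate them by sequences $x_i, y_i, z_i \in X_i$ via the embeddings and pass to the limit in the quasisymmetry inequality for $f_i$, using the joint convergence of $d_i$, $\roo_i$, and $f_i$; this gives the $\eta$-quasisymmetry inequality for $f$. The bound $\roo(q, f(w)) \ge 1/C$ rules out $f$ being constant and, combined with the limiting inequality (applied to a triple $(x, y, z)$ where $f(x) = f(z)$), forces injectivity, so $f$ is a bona fide $\eta$-quasisymmetric embedding. The main subtlety lies in the equicontinuity step: the case split on which of $p_i$ or $w_i$ is far from $x$ is essential, and without the two-sided normalization by $w_i$ one could not simultaneously rule out arbitrary expansion and arbitrary compression of bounded sets by the family $\{f_i\}$.
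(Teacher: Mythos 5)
Your compactness argument (uniform boundedness and equicontinuity of $\{f_i\}$ via the two-sided normalization at $p_i$ and $w_i$, followed by \cite[Lemma 8.22]{DavidSemmes1997}), your extraction of $w$ from the bounded sequence $h_i(w_i)$, and your passage to the limit in the quasisymmetry inequality all match the paper's proof essentially step for step. However, there is a genuine gap at the end: you conclude only that $f$ is an $\eta$-quasisymmetric \emph{embedding}, whereas the lemma asserts that $f\colon X\to Y$ is $\eta$-quasisymmetric, and in this paper a quasisymmetric map is by definition a \emph{homeomorphism} of $X$ onto $Y$. The limit space $(Y,\roo,q)$ is produced by the convergence of the pointed spaces $(Y_i,\roo_i,q_i)$ alone, independently of the maps, so nothing in your argument prevents $f(X)$ from being a proper subset of $Y$. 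Surjectivity genuinely requires a separate argument.

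The paper closes this gap as follows: each $f_i^{-1}$ is quasisymmetric with control function $t\mapsto 1/\eta^{-1}(t^{-1})$ and satisfies the same two-sided normalization with the roles of $(p_i,w_i)$ and $(q_i,f_i(w_i))$ exchanged, so the family $\{f_i^{-1}\}$ is also uniformly bounded on bounded sets. Given $y\in Y$, one picks $g_i(y_i)\to g(y)$, observes that the preimages $x_i=f_i^{-1}(y_i)$ stay in some fixed ball $B_{X_i}(p_i,C')$, extracts a convergent subsequence of $h_i(x_i)$ in the compact ambient set, and uses the convergence $g_i\circ f_i\circ h_i^{-1}\to g\circ f\circ h^{-1}$ to conclude that the limit point is mapped by $f$ to $y$. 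You should add this step; everything else in your proposal is sound and coincides with the paper's route.
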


\begin{proof}
  As remarked above, the convergence of mapping packages follow once we verify that the family $\{f_i\}$ is equicontinuous and uniformly bounded on bounded sets. The argument here can be compared to \cite[Theorems 2.21 and 3.4]{TukiaVaisala1980} and \cite[Corollary 10.30]{Heinonen2001}. Let us start with the equicontinuity. Fix $R,\eps>0$. Let $ \delta > 0 $ be so that  $ \delta < (4C)^{-1} $ and $ \eta\left( 4C \delta \right) \eta\left( (C+R) C \right) C <\eps$. Now $d_i( x_i , p_i) > (4C)^{-1}$ or $d_i( x_i , w_i) > (4C)^{-1}$ for all $x_i\in B_{X_i}(p_i,R)$. Let us first assume that $d_i( x_i , w_i) > (4C)^{-1}$. Since $\eta$ is increasing, we have for all $x_i,y_i\in B_{X_i}(p_i,R)$ with $d_i(x_i,y_i)<\delta$ that
  \begin{align*}
    \roo_i( f_i(x_i) , f_i(y_i) )
    &=\frac{ \roo_i( f_i(x_i) , f_i(y_i) ) }{ \roo_i( f_i(x_i) , f_i(w_i) ) }
      \frac{ \roo_i( f_i(x_i) , f_i(w_i) ) }{ \roo_i( f_i(p_i) , f_i(w_i) ) } \roo_i( f_i(p_i) , f_i(w_i) ) \\
    &\leq \eta\left( \frac{d_i(x_i,y_i) }{ d_i(x_i,w_i) } \right) \eta\left( \frac{d_i(x_i,w_i) }{ d_i(p_i,w_i) } \right) \roo_i( q_i , f_i(w_i) )    \\
    &\leq \eta\left( 4C \delta \right) \eta\left( (C+R) C \right) C \leq \eps.
  \end{align*}
  In the case $d_i( x_i , p_i) > (4C)^{-1}$ we get the same estimate just by switching the roles of $p_i$ and $w_i$ above. We have thus verified the equicontinuity on bounded sets. The uniform boundedness on bounded sets follows immediately since for fixed $R>0$ and $x_i\in B_{X_i}(p_i,R)$ we have
  \begin{align*}
    \roo_i( f_i(x_i) , q_i )
    &=\frac{ \roo_i( f_i(x_i) , f_i(p_i) ) }{ \roo_i( f_i(w_i) , f_i(p_i) ) } \roo_i( f_i(w_i) , q_i ) \\
    &\leq \eta\left( \frac{d_i(x_i,p_i) }{ d_i(w_i,p_i) } \right) C  
    \leq \eta\left( RC \right) C.
  \end{align*}
  Therefore, there exists a subsequence along which the mapping packages converge. In what follows, we keep denoting the subsequence by the original sequence.

  It remains to show that $f \colon X \to Y$ is $\eta$-quasisymmetric. Since the mapping packages $\PP_i$ converge there are Assouad embeddings $h$, $g$, $h_i$, $g_i$ of $X$, $Y$, $X_i$, $Y_i$, respectively, such that
  \begin{equation} \label{embeddingsconverge}
    g_i \circ f_i \circ h_i^{-1} \to g \circ f \circ h^{-1}.
  \end{equation}
  Furthermore, since both pointed metric spaces of $\PP_i$ converge we also have
  \begin{equation} \label{distancesconverge}
  \begin{split}
    d_i(h_i^{-1} (\,\cdot\,) , h_i^{-1} (\,\cdot\,)) &\to d(h^{-1} (\,\cdot\,) , h^{-1} (\,\cdot\,)), \\
    \roo_i(g_i^{-1} (\,\cdot\,) , g_i^{-1} (\,\cdot\,)) &\to \roo(g^{-1} (\,\cdot\,) , g^{-1} (\,\cdot\,)).
  \end{split}
  \end{equation}
  Fix three different points $x,y,z\in X$. Pick three sequences $h_i(x_i)$, $h_i(y_i)$, $h_i(z_i)$ converging to $h(x)$, $h(y)$, $h(z)$, respectively. By \eqref{embeddingsconverge}, we see that $g_i\circ f_i(x_i)$, $g_i\circ f_i(y_i)$, $g_i\circ f_i(z_i)$ converge to $g\circ f(x)$, $g\circ f(y)$, $g\circ f(z)$, respectively. Since each $f_i$ is $\eta$-quasisymmetric we have
  \begin{equation*}
    \frac{\roo_i( f_i(x_i) , f_i(y_i) )}{\roo_i( f_i(x_i) , f_i(z_i) )} \leq \eta\left( \frac{d_i(x_i,y_i)}{d_i(x_i,z_i)} \right).
  \end{equation*}
  Therefore, since $f_i= g_i^{-1} \circ g_i \circ f_i$ and $\mathrm{Id} = h_i^{-1} \circ h_i$, we have, by \eqref{distancesconverge} and the continuity of $\eta$, that
  \begin{equation} \label{eq:limitquasisymmetry}
  \frac{\roo( f(x) , f(y) )}{\roo( f(x) , f(z) )} \leq \eta\left( \frac{d(x,y)}{d(x,z)} \right).
  \end{equation}
  This implies that $f$ is a continuous injection. If it were a bijection, then it is elementary to see that $f^{-1}$ satisfies \eqref{eq:limitquasisymmetry} with $1/\eta^{-1}(t^{-1})$ in place of $\eta(t)$. Hence $f^{-1}$ is continuous and $f$ is $\eta$-quasisymmetric. So the only thing left to prove is that $f$ is surjective.

  To that end, fix $y\in Y$. Pick a sequence $g_i(y_i)$ converging to $g(y)$. By \eqref{distancesconverge}, there is $i_0$ such that $\roo_i(y_i,q_i) \le 2\roo(y,q)$ for all $i \ge i_0$. Similarly as in the beginning of the proof, we see that $\{ f_i^{-1} \}$ is uniformly bounded on bounded sets. Thus there exists $C \in \R$ such that $d_i(f_i^{-1}(y_i),p_i) \le C$ for all $y_i \in B_{Y_i}(q_i,2\roo(y,q))$ for all $i \ge i_0$. Letting $x_i = f_i^{-1}(y_i)$ we have $x_i \in B_{X_i}(p_i,C)$ for all $i \ge i_0$. Since the sequence $(h_i(x_i))$ is contained in a compact subset of some Euclidean space it has a converging subsequence. Denoting the limit point of this subsequence by $z$, we have, by \eqref{embeddingsconverge}, that $f(h^{-1}(z)) = y$. This is what we wanted to show.
  
  Finally, since the Assouad embeddings $h_i$ are bi-Lipschitz with the same constant, the points $h_i(w_i)$ are contained in an annulus centered at the origin. Therefore there exists a converging subsequence. By definitions, this finishes the proof.
\end{proof}

We will next apply the notions of convergence to define weak tangents for metric spaces and quasisymmetric mappings. Let $(X,d)$ be a complete doubling metric space. We say that $(\hat X,\hat d,p)$ is a \emph{weak tangent} of $(X,d)$ if there are a sequence $(p_i)$ of points in $X$ and a sequence $(r_i)$ of positive reals converging to zero such that $(X,d/r_i,p_i)$ converges to $(\hat X,\hat d,p)$. Note that from each given $(p_i)$ and $(r_i)$ one can extract a subsequence along which there exists a weak tangent. If $(p_i)$ is a constant sequence, then weak tangents are called \emph{tangents}.

Furthermore, let $(Y,\roo)$ be another complete doubling metric space and $f \colon X \to Y$. If $(\hat X, \hat d,p)$ and $(\hat Y,\hat\roo,q)$ are weak tangents of $(X,d)$ and $(Y,\roo)$, respectively, then we say that $\hat f \colon \hat X \to \hat Y$ is a \emph{weak tangent mapping} of $f$ if the mapping packages $((X,d/r_i,p_i),(Y,\roo/t_i,q_i),f)$ converge to $((\hat X,\hat d,p),(\hat Y,\hat\roo,q),\hat f)$. Observe that if $f$ is $\eta$-quasisymmetric, then by choosing a sequence $(w_i)$ of points in $X$ such that $d(p_i,w_i)$ converges to zero and setting $r_i = d(p_i,w_i)$ and $t_i = \roo(f(p_i),f(w_i))$ for all $i$, Lemma \ref{lem:quasisymmetric} guarantees that there exists a subsequence along which the weak tangent mapping $\hat f$ of $f$ is $\eta$-quasisymmetric and a point $w \ne p$ in $\hat X$ such that
\begin{equation} \label{eq:1-eta-qs}
   \hat\roo(\hat f(p),\hat f(w)) = \hat d(p,w).
\end{equation}

\section{Rigidity of quasisymmetric mappings}\label{sec:rigidity}

In this section, we prove a rigidity result for quasisymmetric mappings between two metric spaces. We show that if the weak tangent mappings are bi-Lipschitz, then the original quasisymmetric mapping is quasi-Lipschitz. This result allows us to transfer the tangent level rigidity information back to the original quasisymmetric mapping. This is a useful observation since it is often the case that tangents are more regular than the original object. In \S \ref{sec:horizontal-carpets}, we will exhibit this phenomenon in a concrete setting.

\begin{theorem} \label{thm:quasilip}
  Suppose that $(X,d)$ and $(Y,\roo)$ are uniformly perfect compact doubling metric spaces. If $f \colon X \to Y$ is $\eta$-quasisymmetric such that any $\eta$-quasisymmetric weak tangent mapping $\hat f$ of $f$ satisfying \eqref{eq:1-eta-qs} for some $p$ and $w$ is $L$-bi-Lipschitz with $L$ depending only on $\eta$, then $f$ is quasi-Lipschitz.
\end{theorem}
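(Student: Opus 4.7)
The plan is to argue by contradiction via a tangent extraction. Suppose $f$ is not quasi-Lipschitz. Then there exist $\eps>0$ and sequences $(x_n),(y_n)\subset X$ with $r_n:=d(x_n,y_n)\to 0$ satisfying, with $t_n:=\roo(f(x_n),f(y_n))$, the bound $|\log t_n/\log r_n-1|\ge\eps$ for every $n$. After passing to a subsequence I may assume $t_n/r_n\to 0$; the case $t_n/r_n\to\infty$ reduces to this one by applying the argument to $f^{-1}$, which is also quasisymmetric and whose weak tangent mappings are inverses of those of $f$ and hence bi-Lipschitz with the same constant.

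I then form the rescaled mapping packages
\begin{equation*}
\PP_n = ((X,d/r_n,x_n),(Y,\roo/t_n,f(x_n)),f).
\end{equation*}
With $w_n:=y_n$ and $C=1$ the identities $d(x_n,w_n)/r_n=1$ and $\roo(f(x_n),f(w_n))/t_n=1$ place these packages in the scope of Lemma~\ref{lem:quasisymmetric}. Extracting a further subsequence, $\PP_n$ converges to a mapping package $((\hat X,\hat d,p),(\hat Y,\hat\roo,q),\hat f)$ where $\hat f$ is $\eta$-quasisymmetric and the limit $w\in\hat X$ of $w_n$ satisfies $\hat d(p,w)=\hat\roo(\hat f(p),\hat f(w))=1$, so that \eqref{eq:1-eta-qs} holds. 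The hypothesis of the theorem then yields that $\hat f$ is $L$-bi-Lipschitz with $L=L(\eta)$.

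The third and most delicate step is to turn the $L$-bi-Lipschitz control on $\hat f$ into a contradiction with $t_n/r_n\to 0$. Using convergence of mapping packages together with the bi-Lipschitz estimate on $\hat f$, for any $R\ge 1$ and $\epsilon_0>0$, for all sufficiently large $n$ and all $u,v\in X$ with $d(x_n,u),d(x_n,v)\le R r_n$ and $d(u,v)\ge r_n/R$ one obtains
\begin{equation*}
(L^{-1}-\epsilon_0)\tfrac{t_n}{r_n}d(u,v)\le \roo(f(u),f(v)) \le (L+\epsilon_0)\tfrac{t_n}{r_n}d(u,v),
\end{equation*}
so $f$ behaves at scale $r_n$ near $x_n$ like a bi-Lipschitz map with Lipschitz constant $\asymp t_n/r_n$. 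I expect the main obstacle to be the passage from this localized scale-$r_n$ estimate to a global contradiction. The crucial feature to exploit is that $L$ depends only on $\eta$, so the same estimate is available around every basepoint and at every sufficiently small scale; combining these uniform local estimates with $\eta$-quasisymmetry of $f$ and uniform perfectness of $X$ and $Y$, a chaining argument over a cover of $X$ by balls of radius $r_n$ (available by the doubling assumption) should propagate the local Lipschitz control and force $\diam f(X)\lesssim (t_n/r_n)\diam X\to 0$, contradicting $\diam Y>0$.
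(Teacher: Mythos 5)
There is a genuine gap, and it sits exactly where you flag uncertainty. The first two steps are fine as far as they go (the reduction to $t_n\le r_n^{1+\eps}$ via $f^{-1}$, and the extraction of a tangent package via Lemma \ref{lem:quasisymmetric} with $w_n=y_n$ and $C=1$), but they set you up for a dead end: after rescaling the domain by $r_n=d(x_n,y_n)$ and the target by $t_n=\roo(f(x_n),f(y_n))$, the offending pair becomes the pair $(p,w)$ with $\hat d(p,w)=\hat\roo(\hat f(p),\hat f(w))=1$, which is perfectly consistent with $\hat f$ being $L$-bi-Lipschitz. The H\"older anomaly $t_n\le r_n^{1+\eps}$ is a relation \emph{between the two normalizing scales}, and once both are normalized to $1$ it leaves no trace in the tangent. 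So no contradiction can be read off from this tangent, and you are forced into the global propagation step — which does not work. The localized estimate $\roo(f(u),f(v))\asymp (t_n/r_n)\,d(u,v)$ is tied to the basepoint $x_n$: at another basepoint $z$ the correct normalization of the target metric is $\roo(f(z),f(z'))$ for $z'$ at distance $\sim r_n$ from $z$, and nothing in the hypotheses makes this comparable to $t_n$. Chaining over the $\sim(\diam X/r_n)^{\dim}$ balls of a cover loses a multiplicative factor per step, so no bound of the form $\diam f(X)\lesssim (t_n/r_n)\diam X$ with a uniform constant can be extracted. Indeed, if that bound were available the argument would only use $t_n/r_n\to 0$, which is compatible with $f$ being quasi-Lipschitz (and with the theorem's hypotheses), so the step would "prove" $\diam f(X)=0$ — a clear sign it must fail.

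The missing idea is the paper's Lemma \ref{lem:helperLemma} (see also Remark \ref{rem:weak-tangent}): before blowing up, one converts the two-point anomaly $\roo(f(x_i),f(y_i))<d(x_i,y_i)^{1+\kappa}$ into a \emph{three-point} configuration $(a,b,c)$ with $\eps D^{-1}d(a,c)<d(a,b)\le \eps d(a,c)$ but $\roo(f(a),f(b))<\eps^{1+\kappa/2}\roo(f(a),f(c))$. This is done by interpolating a chain of points $z_i^0=y_i,z_i^1,\dots,z_i^{n_i}$ between the scale $d(x_i,y_i)$ and a fixed macroscopic scale using uniform perfectness, and observing via the H\"older bounds of Lemma \ref{lem:holderconstants} that the ratio drop must occur at some step of the chain. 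Since conditions of this triplet are formulated as \emph{ratios}, they survive the rescaling by $d(a,c)$ and $\roo(f(a),f(c))$: in the limit one gets $\hat d(a,c)=\hat\roo(\hat f(a),\hat f(c))=1$ (so \eqref{eq:1-eta-qs} holds and $\hat f$ is $L$-bi-Lipschitz), while $\hat d(a,b)\ge \eps D^{-1}$ and $\hat\roo(\hat f(a),\hat f(b))\le 2\eps^{1+\kappa/2}$, which contradicts bi-Lipschitzness once $\eps<(2DL)^{-2/\kappa}$. Your proposal needs this (or an equivalent device) inserted between steps one and two; without it the blow-up is taken at the wrong configuration and the contradiction is unreachable.
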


Let us sketch the main idea of the proof. Assuming that $f$ is not quasi-Lipschitz, we find a sequence of pairs of points in which $f$ obeys a true H\"older behavior. In Lemma \ref{lem:helperLemma}, we show that for each pair we find a triplet of points, with comparable distances, in which $f$ still obeys a true H\"older behavior. These triplets then allow us to define weak tangets so that the limiting maps are quasisymmetric and the convergence of distances ensures that the true H\"older behavior is visible at the limit. This contradicts the assumption that the weak tangent mappings are bi-Lipschitz.

We will first recall a general lemma about the H\"older behavior of quasisymmetric mappings.

\begin{lemma} \label{lem:holderconstants}
  Suppose that $(X,d)$ and $(Y,\roo)$ are uniformly perfect bounded metric spaces. If $f$ is $\eta$-quasisymmetric, then there exist exponents $\Lambda\ge\lambda>0$ and constants $C\ge c>0$ depending only on $\eta$, the diameters of the spaces, and uniform perfectness constants such that
  \begin{equation*}
    cd(x,y)^\Lambda \le \roo(f(x),f(y)) \le Cd(x,y)^\lambda
  \end{equation*}
  for all $x,y \in X$.
\end{lemma}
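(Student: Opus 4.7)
The plan is to prove the upper bound $\roo(f(x),f(y)) \le C d(x,y)^\lambda$ first, and then to derive the lower bound by applying the same statement to the inverse $f^{-1} \colon Y \to X$, which is $\tilde\eta$-quasisymmetric with $\tilde\eta(t) = 1/\eta^{-1}(1/t)$ depending only on $\eta$. Since $Y$ is uniformly perfect and bounded, the argument is symmetric, and the exponents can then be arranged to satisfy $\Lambda \ge \lambda$ because $d(x,y)$ is bounded above by $\diam(X)$.

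For the upper bound, let $D$ denote the uniform perfectness constant of $X$. Since $\eta$ is continuous with $\eta(0)=0$, I would first choose an integer $m \ge 2$, depending only on $\eta$ and $D$, so that $\alpha := \eta(D^{1-m}) < 1$. Given $x,y \in X$ with $r := d(x,y)$, I may assume that $r$ is small compared to $\diam(X)$, since otherwise the bound is trivial from $\roo(f(x),f(y)) \le \diam(Y)$. The next step is to iterate uniform perfectness at the scales $D^k r$ to build a chain $y = y_0, y_1, \ldots, y_N$ in $X$ with $d(x,y_k) \in [D^{k-1} r, D^k r]$, where $N$ is comparable to $\log(\diam(X)/r)/\log D$. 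For every $k \ge m$, the chain then forces $d(x,y_{k-m})/d(x,y_k) \le D^{1-m}$, so $\eta$-quasisymmetry of $f$ yields
\[
\roo(f(x), f(y_{k-m})) \le \eta(D^{1-m})\, \roo(f(x), f(y_k)) = \alpha\, \roo(f(x), f(y_k)).
\]
Telescoping along the sparser subchain $y_0, y_m, y_{2m}, \ldots$ and bounding the last distance by $\diam(Y)$ would produce $\roo(f(x),f(y)) \le \alpha^{\lfloor N/m \rfloor} \diam(Y)$, from which substituting the estimate on $N$ gives the desired bound $\roo(f(x),f(y)) \le C d(x,y)^\lambda$ with $\lambda = \log(1/\alpha)/(m \log D) > 0$ and $C$ depending only on $\diam(X)$, $\diam(Y)$, $\eta$, and $D$.

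The main obstacle is ensuring that $\eta$ actually contracts along the chain. Uniform perfectness alone only forces consecutive ratios $d(x,y_{k-1})/d(x,y_k)$ to lie in the interval $[D^{-2}, 1]$, and the upper endpoint is useless for producing a contraction factor. The fix is to look at every $m$-th element, amplifying the gap to $D^{1-m}$, which can be made arbitrarily small by choosing $m$ large. Once this reduction is in place, the rest of the upper bound is a routine geometric-series computation, and the lower bound follows verbatim by applying the finished upper Hölder estimate to $f^{-1}$, using the uniform perfectness of $Y$ and the boundedness of $X$.
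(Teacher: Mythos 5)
Your proposal is correct, but it takes a more self-contained route than the paper. The paper's proof is essentially a two-line reduction to cited machinery: it invokes \cite[Theorem 11.3]{Heinonen2001} to upgrade $f$ to a \emph{power} quasisymmetry with control function $\tilde\eta(t)=Q\max\{t^\beta,t^{1/\beta}\}$ (with $Q,\beta$ depending only on $\eta$ and the uniform perfectness constants), then fixes a comparison point $z$ with $d(x,z)\ge\max\{d(x,y),\tfrac12\diam(X)\}$ and applies the power bound once to get $\roo(f(x),f(y))\le Q2^\beta\diam(X)^{-\beta}\diam(Y)\,d(x,y)^\beta$; the lower bound comes from the inverse map exactly as in your argument. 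What you do instead is reprove the substance of that cited theorem from scratch: the chain $y_0,\dots,y_N$ at geometrically increasing scales, the observation that consecutive uniform-perfectness ratios only give $[D^{-2},1]$ and hence the need to skip $m$ steps to force $\eta(D^{1-m})<1$, and the telescoping geometric decay. This is the standard proof underlying Heinonen's Chapter 11, and all the quantitative details you give (the range of $N$, the exponent $\lambda=\log(1/\alpha)/(m\log D)$, the reduction of the lower bound to $f^{-1}$ with $\tilde\eta(t)=1/\eta^{-1}(1/t)$, and the harmless adjustment ensuring $\Lambda\ge\lambda$ using $d(x,y)\le\diam(X)$) check out. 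The trade-off is clear: the paper's version is shorter and keeps the uniform-perfectness bookkeeping hidden inside the citation, while yours is longer but makes the dependence of $\lambda$ and $C$ on $\eta$, $D$, and the diameters completely explicit and requires no external input. One small point of care in your chaining step: uniform perfectness at scale $D^k r$ is only applicable while $X\setminus B(x,D^k r)\ne\emptyset$, which holds as long as $D^k r<\tfrac12\diam(X)$; this is exactly what caps $N$ at the value you state, so it is consistent with your estimate, but it is worth saying explicitly when writing the argument out in full.
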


\begin{proof}
  The lemma is stated in \cite[Corollary 11.5]{Heinonen2001}. By \cite[Theorem 11.3]{Heinonen2001}, there are constants $Q \ge 1$ and $0 < \beta \le 1$ depending only on $\eta$ and uniform perfectness constants such that $f$ is $\tilde \eta$-quasisymmetric where
  \begin{equation*}
    \tilde\eta(t) = Q\max\{ t^\beta, t^{1/\beta} \}
  \end{equation*}
  for all $t \ge 0$. Fix $x,y \in X$. Choose $z \in X$ such that $d(x,z)\geq \max\{d(x,y) , 2^{-1}\diam(X)\}$. Now
  \begin{align*}
    \roo(f(x),f(y))
    &\le Q \biggl( \frac{d(x,y)}{d(x,z)} \biggr)^{\beta} \roo(f(x),f(z)) \le Q 2^\beta \diam(X)^{-\beta} \diam(Y) d(x,y)^{\beta}.
  \end{align*}
  Note that the same estimates work even if we have to choose $z=y$. The other direction follows by using similar estimates for the inverse of $f$.
\end{proof}

The above result is needed in the proof of the following lemma which is a key observation to prove Theorem \ref{thm:quasilip}.

\begin{lemma} \label{lem:helperLemma}
  Suppose that $(X,d)$ and $(Y,\roo)$ are uniformly perfect (with constant $D \ge 1$) bounded metric spaces and $f \colon X \to Y$ is $\eta$-quasisymmetric. If $\kappa>1$ and $(x_i)$ and $(y_i)$ are sequences of points in $X$ such that
  \begin{equation} \label{eq:roo-cond}
    \roo(f(x_i),f(y_i)) < d(x_i,y_i)^{1+\kappa}
  \end{equation}
  for all $i \in \N$ and $d(x_i,y_i) \to 0$ as $i \to \infty$, then for every $0 < \eps_0 < 1$ there is $0<\eps<\eps_0$ such that for every $m \in \N$ there exists a triplet $(a,b,c)$ of three distinct points in $X$ such that
  \begin{enumerate}
    \item $d(a,b) \le 1/m$, \label{enum:ehto1}
    \item $\eps D^{-1}d(a,c) < d(a,b) \le \eps d(a,c)$, \label{enum:ehto2}
    \item $\roo(f(a),f(b)) < \eps^{1+\kappa/2} \roo(f(a),f(c))$. \label{enum:ehto3}
  \end{enumerate}
\end{lemma}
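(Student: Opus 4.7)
The plan is to construct, for each $m \in \N$, a finite chain of points radiating outward from $b_0 := y_i$ centered at $a := x_i$ via the uniform perfectness of $X$, and then to apply a pigeonhole over the consecutive pairs of the chain to locate the desired triplet. I fix $\eps := \eps_0/2$ once and for all; the real work is shifted to choosing $i$ large enough, so that $R := d(x_i, y_i)$ is small in a manner depending on $m$, $\eps$, $\kappa$, and the constants supplied by Lemma~\ref{lem:holderconstants}.

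For the chain, I proceed by induction. Given $b_k$ with $r_k := D\,d(a, b_k)/\eps < \diam(X)$, the uniform perfectness of $X$ applied at $a$ with radius $r_k$ furnishes $b_{k+1}$ with $d(a, b_{k+1}) \in (d(a, b_k)/\eps,\, D\,d(a, b_k)/\eps]$. (If the furnished point happens to lie on the right boundary, I reapply uniform perfectness with a slightly smaller radius to obtain a point strictly inside, so that (\ref{enum:ehto2}) holds with the strict inequality the statement demands.) I terminate at the first index $K$ with $d(a, b_K) > 1/m$; then (\ref{enum:ehto1}) holds for every $k < K$, and iterating the inequalities $R/\eps^k < d(a, b_k) \le R(D/\eps)^k$ yields
\[
K \log(1/\eps) \le \log(1/(Rm)) + \log(1/\eps).
\]

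For the pigeonhole step, I suppose for contradiction that (\ref{enum:ehto3}) fails for every triplet $(a, b_k, b_{k+1})$ with $k \in \{0, 1, \ldots, K-1\}$, that is,
\[
\roo(f(a), f(b_{k+1})) / \roo(f(a), f(b_k)) \le \eps^{-(1+\kappa/2)}.
\]
Telescoping, combined with the hypothesis $\roo(f(a), f(b_0)) < R^{1+\kappa}$, produces $\roo(f(a), f(b_K)) \le R^{1+\kappa}\,\eps^{-(1+\kappa/2) K}$. On the other hand, Lemma~\ref{lem:holderconstants}, applicable since $X$ and $Y$ are uniformly perfect and bounded, supplies constants $c, \Lambda > 0$ with $\roo(f(a), f(b_K)) \ge c\, d(a, b_K)^{\Lambda} > c\, m^{-\Lambda}$. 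Combining these, taking logarithms, and inserting the upper bound on $K$ reduces the resulting inequality to
\[
(\kappa/2) \log(1/R) \le (\Lambda - 1 - \kappa/2) \log m + (1+\kappa/2) \log(1/\eps) - \log c,
\]
which fails once $R$ is chosen small enough (depending on $m$), yielding the desired contradiction. For any resulting good $k \in \{0, \ldots, K-1\}$, setting $b := b_k$ and $c := b_{k+1}$ produces a triplet satisfying (\ref{enum:ehto1})--(\ref{enum:ehto3}).

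The main obstacle is the quantitative bookkeeping: I must ensure the positive exponent gap $(1+\kappa)-(1+\kappa/2) = \kappa/2$ allows the $\log(1/R)$ term to overwhelm the $(\Lambda - 1 - \kappa/2) \log m$ term, which is always possible since $R = d(x_i, y_i)\to 0$ by hypothesis allows us to take $R$ arbitrarily small. Conceptually, the argument rests on the observation that a single pair at which $f$ exhibits a stronger-than-H\"older contraction forces, via telescoping against a macroscopic H\"older lower bound, a super-contraction between some intermediate consecutive pair along the uniform perfectness chain.
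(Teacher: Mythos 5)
Your argument is essentially the paper's own proof: starting from $b_0=y_i$ you build a chain of points around $a=x_i$ whose distances to $a$ grow geometrically by a factor in $(\eps^{-1}, D\eps^{-1}]$ via uniform perfectness, run the chain out to a macroscopic scale, and then telescope the negation of \eqref{enum:ehto3} against the global H\"older lower bound of Lemma \ref{lem:holderconstants} to contradict \eqref{eq:roo-cond} once $d(x_i,y_i)$ is small; the quantifier order ($\eps$ fixed before $m$, the index $i$ chosen depending on $m$) is also handled correctly. The one point you leave unaddressed is the applicability of uniform perfectness along the whole chain: producing $b_{k+1}$ requires $X\setminus B(a,Dd(a,b_k)/\eps)\neq\emptyset$, and your guard ``$r_k<\diam(X)$'' neither guarantees this nor ensures that the chain actually reaches a point with $d(a,b_K)>1/m$ before stalling --- indeed for $1/m\ge\diam(X)$ (say $m=1$) your terminal index $K$ does not exist at all, and your lower bound $\roo(f(a),f(b_K))>c\,m^{-\Lambda}$ uses precisely $d(a,b_K)>1/m$. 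The paper handles this by replacing $1/m$ with $1/m_0$ for a suitable $m_0\ge m$ chosen so that $X\setminus B(x,\eps^{-1}D/m_0)\neq\emptyset$ for every $x\in X$, which keeps uniform perfectness applicable throughout while still giving \eqref{enum:ehto1}, and then runs the same telescoping with $m_0$ in place of $m$. Your proof needs this (easy) patch --- for instance by noting that the conclusion for $m$ follows from the conclusion for any larger $m'$, or by terminating the chain at a fixed scale comparable to $\eps\diam(X)/D$ --- but otherwise the construction and the quantitative bookkeeping are sound and match the paper's.
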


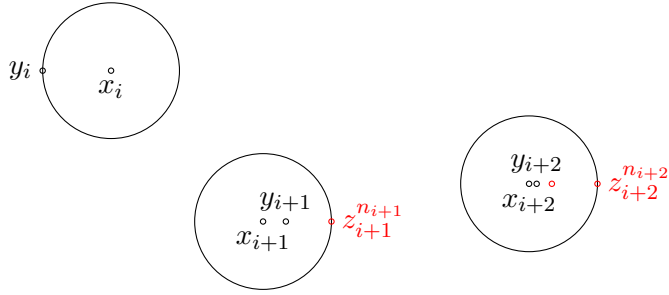
\begin{figure}[t]
  \begin{tikzpicture}
    \draw (-1,2) node[below] {$x_i$} circle (1pt);
    \draw (-1,2) circle (0.9cm);
    \draw (-1.9,2) node[left] {$y_i$} circle (1pt);
    \draw (1,0) node[below] {$x_{i+1}$} circle (1pt);
    \draw (1,0) circle (.9cm);
    \draw (1.3,0) node[above] {$y_{i+1}$} circle (1pt);
    \draw[red] (1.9,0) node[right] {$z^{n_{i+1}}_{i+1}$} circle (1pt);
    \draw (4.5,.5) node[below] {$x_{i+2}$} circle (1pt);
    \draw (4.5,.5) circle (.9cm);
    \draw (4.6,.5) node[above] {$y_{i+2}$}circle (1pt);
    \draw[red] (4.8,.5) circle (1pt);
    \draw[red] (5.4,.5) node[right] {$z^{n_{i+2}}_{i+2}$} circle (1pt);
  \end{tikzpicture}
  \caption{In the proof of Lemma \ref{lem:helperLemma}, we choose points $z^{j}_i$ for all $j\in\{0,\ldots,n_i\}$ so that the distance $d(x_i,z^{n_i}_i)$ is independent of $i$ and the relative distance $d(x_i,z^{j}_i)/d(x_i,z^{j+1}_i)$ is roughly $\eps$. Thus $n_i\to\infty$ as $d(x_i,y_i)\to 0$.}
  \label{fig:valinta}
\end{figure}

\begin{proof}
  Observe that, by Lemma \ref{lem:holderconstants}, there exist exponents $\Lambda \ge \lambda > 0$ and constants $C \ge c > 0$ such that
  \begin{equation} \label{eq:holderconstants}
    cd(x,y)^\Lambda \le \roo(f(x),f(y)) \le Cd(x,y)^\lambda
  \end{equation}
  for all $x,y \in X$. By \eqref{eq:roo-cond}, we have $\Lambda > 1+\kappa$.

  Fix $\eps_0>0$. Let us first show that for every $0<\eps<\eps_0$ and $m \in \N$ there exist triplets $(a,b,c)$ satisfying \eqref{enum:ehto1} and \eqref{enum:ehto2}. To that end, fix $0<\eps<\eps_0$ and $m \in \N$. Let $m_0 \ge m$ be such that $X \setminus B(x,\eps^{-1}D/m_0) \ne \emptyset$ for all $x \in X$ and choose $i \in \N$ so that
  \begin{equation} \label{eq:kanihatusta}
    d(x_i,y_i)^{\kappa/2} < \min\{ cm_0^{-(\Lambda-1-\kappa/2)}, m_0^{-\kappa/2} \}.
  \end{equation}
  Set $z_i^0 = y_i$ and, relying on the uniform perfectness, choose points $z_i^1,\ldots,z_i^{n_i}$ inductively so that
  \begin{equation*}
    \eps D^{-1}d(x_i,z_i^j) < d(x_i,z_i^{j-1}) \le \eps d(x_i,z_i^j)
  \end{equation*}
  for all $j \in \{ 1,\ldots,n_i \}$, where $n_i \in \N$ is such that
  \begin{equation*}
    d(x_i,z_i^{n_i-1}) < m_0^{-1} \le d(x_i,z_i^{n_i}).
  \end{equation*}
  See Figure \ref{fig:valinta} for an illustration. Note that, by \eqref{eq:kanihatusta}, such a number $n_i$ exists. Therefore, $X \setminus B(x_i, \eps^{-1}d(x_i,z_i^{j-1})) \supset X \setminus B(x_i, \eps^{-1}D/m_0) \ne \emptyset$ for all $j \in \{ 1,\ldots,n_i \}$ and the uniform perfectness condition is applicable. It now follows immediately that for each $j \in \{ 1,\ldots,n_i \}$ the triplet $(x_i,z_i^{j-1},z_i^j)$ satisfies the conditions \eqref{enum:ehto1} and \eqref{enum:ehto2}.
  
  Let us next assume to the contrary that for every $0<\eps<\eps_0$ there exists $m \in \N$ such that for every triplet $(a,b,c)$ satisfying \eqref{enum:ehto1} and \eqref{enum:ehto2}, the condition \eqref{enum:ehto3} fails, i.e.\
  \begin{equation} \label{eq:3-failed}
    \roo(f(a),f(b)) \ge \eps^{1+\kappa/2} \roo(f(a),f(c)).
  \end{equation} 
  Fix $0<\eps<\eps_0$ and let $m \in \N$ be as above. Then, in particular, the triplets $(x_i,z_i^{j-1},z_i^j)$ found in the first part of the proof satisfy \eqref{eq:3-failed} for all $j \in \{ 1,\ldots,n_i \}$. Now, using \eqref{eq:roo-cond}, \eqref{eq:3-failed}, \eqref{eq:holderconstants}, and \eqref{eq:kanihatusta}, we get
  \begin{align*}
    d(x_i,y_i)^{1+\kappa} &> \roo(f(x_i),f(y_i)) \ge (\eps^{1+\kappa/2})^{n_i} \roo(f(x_i),f(z_i^{n_i})) \\
    &= (\eps^{n_i}d(x_i,z_i^{n_i}))^{1+\kappa/2} \frac{\roo(f(x_i),f(z_i^{n_i}))}{d(x_i,z_i^{n_i})^{1+\kappa/2}} \\
    &\ge d(x_i,y_i)^{1+\kappa/2} cd(x_i,z_i^{n_i})^{\Lambda-1-\kappa/2} \\
    &\ge d(x_i,y_i)^{1+\kappa/2} cm_0^{-(\Lambda-1-\kappa/2)} \\
    &\ge d(x_i,y_i)^{1+\kappa}
  \end{align*}
  which is a contradiction; see Figure \ref{fig:contra} for an illustration. Thus, for some $j \in \{ 1,\ldots,n_i \}$, the triplet $(x_i,z_i^{j-1},z_i^j)$ satisfies the conditions \eqref{enum:ehto1}, \eqref{enum:ehto2}, and \eqref{enum:ehto3}.
\end{proof}

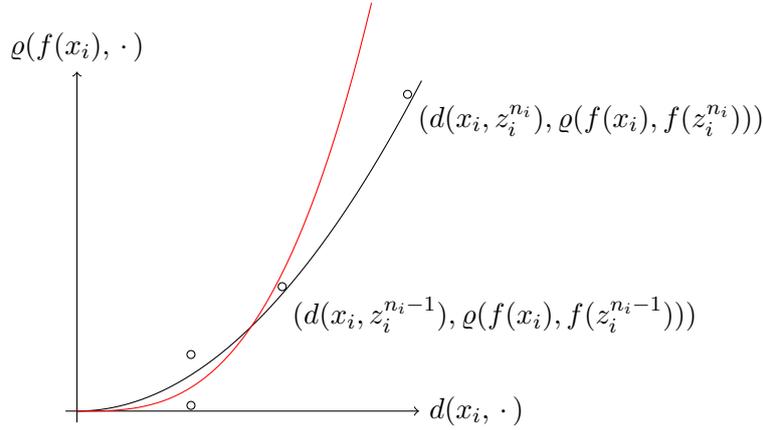
\begin{figure}[t]
  \begin{tikzpicture}[scale=1.5]
    \draw[->] (-.1,0) -- (3,0) node[right] {$d(x_i,\,\cdot\,)$};
    \draw[->] (0,-.1) -- (0,3) node[above] {$\roo(f(x_i),\,\cdot\,)$};
    \draw[xscale=5.5, yscale=29, domain=0:.55,smooth,variable=\x] plot ({\x},{\x*\x/3});
    \draw (2.9,2.8) node[below right] {$(d(x_i,z_i^{n_i}),\roo(f(x_i),f(z_i^{n_i})))$} circle (1pt);
    \draw (1.8,1.1) node[below right] {$(d(x_i,z_i^{n_i-1}),\roo(f(x_i),f(z_i^{n_i-1})))$} circle (1pt);
    \draw (1,.5) circle (1pt);
    \draw (1,.05) circle (1pt);
    \draw[xscale=5.5, yscale=29,domain=0:.47,smooth,variable=\x,red] plot ({\x},{1.2*\x^3});
  \end{tikzpicture}
  \caption{When stepping $j$ from $n_i$ to $0$, the ratios $d(x_i,z_i^j)/d(x_i,z_i^{j+1})$ are roughly $\eps$, meanwhile the ratios $\roo( f(x_i),f(z_i^j) )/\roo( f(x_i),f(z_i^{j+1}) )$ are larger than $\eps^{1+\kappa/2}$, since the triplets $(x_i,z_i^{j-1},z_i^j)$ satisfy \eqref{eq:3-failed}. Thus we are basically saying that the points $( d(x_i,z_i^{j}), \roo(f(x_i),f(z_i^{j})))$ stay above the graph of $x \mapsto Cx^{1+\kappa/2}$ for some constant $C$. Since we start from a distance $d(x_i,z^{n_i}_i)$ that is independent of $i$ and the space is compact, also $\roo(f(x_i),f(z^{n_i}_i))$ is roughly a constant and thus the constant $C$ is independent of $i$. On the other hand, our original assumption was that $\roo(f(x_i), f(y_i))<d(x_i,y_i)^{1+\kappa}$, so when we reach $z^0_i=y_i$, the point $( d(x_i,y_i), \roo(f(x_i),f(y_i)))$ should be below the graph of $x \mapsto Cx^{1+\kappa}$. This is where the contradiction arises, since for small $x$ the graph of $x\mapsto x^{1+\kappa/2}$ is below the graph of $x\mapsto Cx^{1+\kappa}$ and $d(x_i,y_i)/d(x_i,z_i^{n_i})$ gets arbitrarily small as $i\to\infty$.}
  \label{fig:contra}
\end{figure}

\begin{remark} \label{rem:weak-tangent}
  To prove Theorem \ref{thm:quasilip}, it seems to be essential to use weak tangents instead of tangents. Even though the sequence $(x_i)$ converges to some point $x$ (along some subsequence), it might happen that when looking at triplets $(x,x_i,y_i)$, the relative distance $d(x_i,y_i)/d(x,y_i)$ tends to zero and we would not see anything relevant about $f$ at the tangent at $x$. For this reason we need the triplets found in Lemma \ref{lem:helperLemma}.
\end{remark}

We are now ready to prove Theorem \ref{thm:quasilip}.

\begin{proof}[Proof of Theorem \ref{thm:quasilip}]
  Suppose to the contrary that there is an $\eta$-quasisymmetric mapping $f \colon X \to Y$ which is not quasi-Lipschitz. Then there is $\kappa>0$ such that for every $\delta>0$ there exist $x,y \in X$ with $d(x,y)<\delta$ such that
  \begin{equation*}
    \roo(f(x),f(y)) > d(x,y)^{1-\kappa} \quad \text{or} \quad \roo(f(x),f(y)) < d(x,y)^{1+\kappa}.
  \end{equation*}
  We may assume that $\roo(f(x),f(y)) < d(x,y)^{1+\kappa}$ since otherwise we can consider the inverse of $f$. Observe that the inverse of a weak tangent mapping of $f$ is a weak tangent mapping of $f^{-1}$. Thus there exist sequences $(x_i)$ and $(y_i)$ of points in $X$ such that
  \begin{equation*}
    \roo(f(x_i),f(y_i)) < d(x_i,y_i)^{1+\kappa}
  \end{equation*}
  for all $i \in \N$ and $d(x_i,y_i) \to 0$ as $i \to \infty$. Let $L \ge 1$ be the common bi-Lipschitz constant of $\eta$-quasisymmetric weak tangent mappings given by the assumption and let $D \ge 1$ be the uniform perfectness constant of $X$. By Lemma \ref{lem:helperLemma}, there exist $0<\eps<(2DL)^{-2/\kappa}$ and a sequence of triplets $(a_m,b_m,c_m)$ such that
  \begin{enumerate}
    \item $d(a_m,b_m) \le 1/m$,
    \item $\eps D^{-1}d(a_m,c_m) < d(a_m,b_m) \le \eps d(a_m,c_m)$,
    \item $\roo(f(a_m),f(b_m)) < \eps^{1+\kappa/2} \roo(f(a_m),f(c_m))$,
  \end{enumerate}
  for all $m \in \N$.
  
  By Lemma \ref{lem:quasisymmetric}, the mapping packages $((X,d/d(a_m,c_m),a_m), (Y,\roo/\roo(f(a_m),f(c_m)),f(a_m)), f)$ converge to a mapping package $((\hat X,\hat d,a), (\hat Y,\hat\roo,f(a)), \hat f)$ along some subsequence (which we keep denoting as the original sequence). By the convergence of the spaces, there are Assouad embeddings $h$, $g$, $h_m$, $g_m$ of $(\hat X,\hat d)$, $(\hat Y,\hat \roo)$, $(X,d/d(a_m,c_m))$, $(Y,\roo/\roo(f(a_m),f(c_m)))$, respectively; recall the proof of Lemma \ref{lem:quasisymmetric}. Observe that $h_m(a_m) \to h(a)$ and $g_m \circ f(a_m) \to g \circ \hat f(a)$ as $m \to \infty$ and likewise for $b_m$ and $c_m$. Moreover, $\hat f \colon \hat X \to \hat Y$ is $\eta$-quasisymmetric and $\hat \roo(\hat f(a),\hat f(c)) = \hat d(a,c)$. Hence, by the assumption, $\hat f$ is a bi-Lipschitz mapping with the constant $L$. Since the embeddings $g$ and $g_m$ are bi-Lipschitz with the same constants there exists $m_0 \in \N$ such that
  \begin{equation*}
    \hat\roo(\hat f(a),\hat f(b)) \le \frac{\roo(f(a_m),f(b_m))}{\roo(f(a_m),f(c_m))} + \eps^{1+\kappa/2}
  \end{equation*}
  for all $m \ge m_0$. Furthermore, by the convergence of the distances and \eqref{enum:ehto2}, we have $\eps D^{-1}\hat d(a,c) \le \hat d(a,b)$. Since $\hat d(a,c) = 1$ and $\hat f$ is $L$-bi-Lipschitz we get, by \eqref{enum:ehto3},
  \begin{align*}
    \eps D^{-1}L^{-1} &= \eps D^{-1}L^{-1}\hat d(a,c) \le L^{-1}\hat d(a,b) \le \hat\roo(\hat f(a),\hat f(b)) \\
    &\le \frac{\roo(f(a_m),f(b_m))}{\roo(f(a_m),f(c_m))} + \eps^{1+\kappa/2} \le 2\eps^{1+\kappa/2} < \eps D^{-1}L^{-1}.
  \end{align*}
  This contradiction finishes the proof.
\end{proof}

To finish this section, we show that quasi-Lipschitz mappings preserve dimension. We denote the Hausdorff, upper Minkowski, and lower Minkowski dimensions by $\dimh$, $\udimm$, and $\ldimm$, respectively.

\begin{lemma} \label{lem:quasiliphausdorff}
  If $(X,d)$ and $(Y,\roo)$ are separable metric spaces and $f \colon X \to Y$ quasi-Lipschitz, then $\dimh f(X) = \dimh X$. Furthermore, if $(X,d)$ and $(Y,\roo)$ are compact metric spaces, then $\udimm f(X) = \udimm X$ and $\ldimm f(X) = \ldimm X$.
\end{lemma}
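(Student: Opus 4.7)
The plan is to unpack the quasi-Lipschitz hypothesis into a concrete local bi-H\"older estimate, use it to reduce to a local question via stability of the dimensions under unions, and then invoke the standard scaling of $\dimh$, $\udimm$ and $\ldimm$ under H\"older maps. Letting the H\"older exponents tend to $1$ will give equality.

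First, I would translate the defining relation $\log \roo(f(x),f(y))/\log d(x,y) \to 1$ into the statement that, for every $\eps > 0$, there exists $\delta > 0$ such that
\[
  d(x,y)^{1+\eps} \le \roo(f(x),f(y)) \le d(x,y)^{1-\eps}
\]
whenever $0 < d(x,y) < \delta$. The lower inequality forces $f$ to be injective on any subset $B \subset X$ with $\diam(B) < \delta$, and on such a $B$ the restriction $f|_B$ is $(1-\eps)$-H\"older while its inverse $f|_B^{-1} \colon f(B) \to B$ is $(1+\eps)^{-1}$-H\"older.

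Second, I would cover $X$ by pieces $B_k$ of diameter less than $\delta$: countably many open balls of radius $\delta/4$ when $X$ is only assumed separable, and a finite subcover when $X$ is compact. On each $B_k$ the standard fact that an $\alpha$-H\"older map scales any of our dimensions by a factor at most $\alpha^{-1}$ applied both to $f|_{B_k}$ and to $f|_{B_k}^{-1}$ yields
\[
  \frac{\dim B_k}{1+\eps} \le \dim f(B_k) \le \frac{\dim B_k}{1-\eps},
\]
where $\dim$ stands for $\dimh$, $\udimm$ or $\ldimm$. Countable stability of $\dimh$ propagates the inequality from the pieces $B_k$ to all of $X$ in the separable setting, while finite stability of $\udimm$ and $\ldimm$ does the same in the compact setting. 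Sending $\eps \to 0$ then collapses $(1-\eps)^{-1}$ and $(1+\eps)^{-1}$ to $1$ and delivers the claimed equalities.

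There is no serious obstacle here; the only point that really uses the hypotheses is that Minkowski dimensions are \emph{finitely} (but not countably) stable, which is precisely why compactness is invoked in the second half of the statement to guarantee a finite subcover of $X$ by pieces of diameter below $\delta$. The Hausdorff half needs only the countable cover available from separability, and the small bookkeeping item — that $f|_{B_k}^{-1}$ is well-defined on each small-diameter piece — is supplied automatically by the lower H\"older bound.
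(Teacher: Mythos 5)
Your argument is essentially the paper's own proof: both convert the quasi-Lipschitz hypothesis into the local bi-H\"older estimate $d(x,y)^{1+\eps}\le\roo(f(x),f(y))\le d(x,y)^{1-\eps}$ for $d(x,y)$ small, reduce to the locally injective situation via countable stability of $\dimh$ (resp.\ finite stability of the Minkowski dimensions on a compact space), apply the standard scaling of each dimension under H\"older maps in both directions, and let $\eps\to 0$. The only differences are cosmetic --- the paper writes the H\"older scaling out explicitly through $\delta$-covers and covering numbers rather than citing it --- and you share with the paper the slightly glib appeal to ``finite stability'' of $\ldimm$, which fails for general finite unions but can be circumvented here by running the covering-number estimate directly on $X$ and $f(X)$.
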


\begin{proof}
  Note that since a quasi-Lipschitz mapping is locally invertible, $X$ is separable, and the Hausdorff dimension is countably stable, we may, without loss of generality assume that $f$ is invertible. Thus it suffices to show that $\dimh f(X) \le \dimh X$. Let $t>\dimh X$ and choose $\eps>0$ such that $(1-\eps)t > \dimh X$. It suffices to show that $\HH^t(f(X))<\infty$. By the quasi-Lipschitz assumption, there is $\delta_0>0$ such that
  \begin{equation*}
    d(x,y)^{1+\eps} \le \roo(f(x),f(y)) \le d(x,y)^{1-\eps}
  \end{equation*}
  whenever $d(x,y)<\delta_0$. Since $\HH^{(1-\eps)t}(X)=0$, we may choose for each $0<\delta<\delta_0$ a countable $\delta$-cover $\{ U_i \}$ of $X$ such that
  \begin{equation*}
    \sum_i \diam(U_i)^{(1-\eps)t} < 1.
  \end{equation*}
  Obviously the collection $\{ f(U_i) \}$ covers $f(X)$, so
  \begin{equation*}
    \sum_i \diam(f(U_i))^t \le \sum_i \diam(U_i)^{(1-\eps)t} < 1.
  \end{equation*}
  Since this holds for all $0<\delta<\delta_0$ we have shown that $\HH^t(f(X)) < \infty$.

  To show the second claim, observe that since $X$ is compact and the Minkowski dimension is finitely stable, we may again assume that $f$ is invertible. Let $t>\udimm X$ and choose $\eps>0$ such that $(1-\eps)t > \udimm X$. By the quasi-Lipschitz assumption, there is $\delta_0>0$ such that
  \begin{equation*}
    d(x,y)^{1+\eps} \le \roo(f(x),f(y)) \le d(x,y)^{1-\eps}
  \end{equation*}
  whenever $d(x,y)<\delta_0$. Now for all $r<\delta_0$, it holds that $f(B_X(x,r))\subset B_Y( f(x), r^{(1-\eps)} ) $. Therefore we get
  \begin{align*}
   (1-\eps) t
   &> \udimm X
    = \limsup_{r\downarrow 0} \frac{\log N( X , r^{ \frac{1}{1-\eps} } ) }{ -\log r^{ \frac{1}{1-\eps} } }
    \geq \limsup_{r\downarrow 0} \frac{\log N( f(X) , r ) }{ -\frac{1}{1-\eps}\log r }
    = (1-\eps)\udimm f(X)
  \end{align*}
  and similarly for $\ldimm$. Since $f$ was assumed to be invertible the other inequalities follow by the same estimate for $f^{-1}$.
\end{proof}

\section{Fibered spaces}
In this section, we present a condition for weak tangent sets that guarantees that the assumptions of Thorem \ref{thm:quasilip} are satisfied. The condition is a modification of a result of Le Donne and Xie \cite[Theorem 1.1]{LeDonneXie2015} for our purposes. We show that if the weak tangent sets are unions of fibered spaces, then the weak tangent mappings that map fibers onto fibers are bi-Lipschitz with the same constant. To this end, we introduce the following definition. If $(X,d)$ and $(Y,\roo)$ are metric spaces, $f \colon X \to Y$ is $\eta$-quasisymmetric, and there are points $p,w\in X$ and a constant $C \ge 1$ such that
\begin{equation} \label{eq:C-condition}
  C^{-1} \leq \frac{ \roo(f(p),f(w)) }{ d(p,w) } \le C,
\end{equation}
then we say that $f$ is $(C,\eta)$-\emph{quasisymmetric}. Note that a given quasisymmetric mapping clearly satisfies \eqref{eq:C-condition} for some $C$. The idea here is to show that the restriction of a $(1,\eta)$-quasisymmetric weak tangent mapping to each fibered space is a $(C,\eta)$-quasisymmetric mapping with the same constant $C$. This will then guarantee the existence of a uniform bi-Lipschitz constant in the assumptions of Theorem \ref{thm:quasilip}.

Let us first recall the definition of the fibered space from \cite{LeDonneXie2015}. Let $(X,d)$ be a metric space and $F,G \subset X$ closed sets. We say that $F$ and $G$ are \emph{parallel} if there exists $c>0$ such that
\begin{equation*}
  \dist(y,F) = \dist(x,G) = c
\end{equation*}
for all $x \in F$ and $y \in G$. Here $\dist(x,F) = \inf\{ d(x,z) : z \in F \}$. It is easy to see that if $F$ and $G$ are parallel, then $\dist(F,G) = d_H(F,G)$, where
\begin{equation*}
  \dist(F,G) = \inf\{ d(x,y) : x \in F \text{ and } y \in G \}
\end{equation*}
and
\begin{equation} \label{eq:Hausdorff_distance}
  d_H(F,G) = \sup(\{ \dist(y,F) : y \in G \} \cup \{ \dist(x,G) : x \in F \})
\end{equation}
is the \emph{Hausdorff distance}. Recall that $F$ is \emph{geodesic} if for all $x,y\in F$ there exists a path $\gamma\colon[0,1]\to F$ so that $\gamma(0)=x$, $\gamma(1)=y$ and $\length(\gamma)=d(x,y)$, where
\[
 \length(\gamma)=\sup\Bigl\{ \sum_{i=1}^n d(\gamma(t_{i-1}),\gamma(t_{i})) : n\in\N \text{ and } 0 = t_0 < t_1 < \cdots < t_{n-1} < t_n = 1 \Bigr\}.
\]
A metric space $(X,d)$ is called \emph{fibered} if there are an index set $I$ and closed sets $F_i\subset X$, $i\in I$, called \emph{fibers}, with $X=\bigcup_{i\in I}F_i$ so that the following properties are satisfied:
\begin{labeledlist}{F}
 \item \label{F1} Fibers are unbounded geodesic metric spaces: $(F_i,d|_{F_i})$ is an unbounded geodesic metric space for all $i\in I$.
 
 \item \label{F2} Fibers have positive distance: $\dist(F_i,F_j) > 0$ for all $i\neq j$.
 
 \item \label{F3} Non-parallel fibers diverge: $d_H(F_i,F_j)=\infty$ for all non-parallel $F_i$ and $F_j$.
 
 \item \label{F4} Parallel fibers are not isolated: for any fiber $F_i$, there exists a sequence of fibers $(F_{n})$ so that each $F_n$ is parallel to $F_i$ and $\dist( F_i , F_{n} ) \to 0$ as $n\to\infty$. 
\end{labeledlist}

The following lemma is an easy corollary of \cite[Theorem 1.1]{LeDonneXie2015}.

\begin{lemma} \label{lem:FiberedBiLipConstants}
  Suppose that $(X,d)$ and $(Y,\roo)$ are fibered spaces and $C\geq 1$. If $f \colon X \to Y$ is $(C,\eta)$-quasisymmetric such that it sends fibers of $X$ homeomorphically onto fibers of $Y$, then $f$ is $L$-bi-Lipschitz, where $L \ge 1$ depends only on $\eta$ and $C$.
\end{lemma}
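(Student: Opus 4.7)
The plan is to reduce the lemma to \cite[Theorem 1.1]{LeDonneXie2015} by rescaling the target metric so that $f$ becomes normalized. That theorem asserts that any $\eta$-quasisymmetric homeomorphism between fibered spaces which sends fibers homeomorphically onto fibers is bi-Lipschitz. The bi-Lipschitz constant cannot be a function of $\eta$ alone, since post-composing $f$ with a dilation of $Y$ by a factor $\mu\gg 1$ yields an equally $\eta$-quasisymmetric map with an arbitrarily large bi-Lipschitz constant; the $(C,\eta)$-condition \eqref{eq:C-condition} is precisely what fixes this scaling ambiguity.

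Concretely, set $\lambda := \roo(f(p),f(w))/d(p,w)$, so that $\lambda\in[C^{-1},C]$, and consider the rescaled metric $\roo':=\roo/\lambda$ on $Y$. The four fibered space axioms \ref{F1}--\ref{F4} are scale invariant: the geodesic property, positive inter-fiber distance, divergence of non-parallel fibers, and non-isolation of parallel fibers all transfer from $(Y,\roo)$ to $(Y,\roo')$ with the same collection of fibers. Quasisymmetry is scale invariant in the target, so $f\colon(X,d)\to(Y,\roo')$ remains $\eta$-quasisymmetric, still sends fibers homeomorphically onto fibers, and now satisfies the normalization $\roo'(f(p),f(w))=d(p,w)$. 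Under this normalization, applying \cite[Theorem 1.1]{LeDonneXie2015} gives that $f\colon(X,d)\to(Y,\roo')$ is $L_0$-bi-Lipschitz with $L_0=L_0(\eta)$. Unwinding, $\roo(f(x),f(y)) = \lambda\,\roo'(f(x),f(y))$ for all $x,y\in X$, and since $\max\{\lambda,\lambda^{-1}\}\le C$ we obtain
\begin{equation*}
 (L_0 C)^{-1}\, d(x,y) \;\le\; \roo(f(x),f(y)) \;\le\; L_0 C\, d(x,y),
\end{equation*}
so $f$ is $L$-bi-Lipschitz with $L:=L_0 C$, depending only on $\eta$ and $C$.

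The main obstacle is confirming the quantitative version of \cite[Theorem 1.1]{LeDonneXie2015}: once the normalization $\roo'(f(p),f(w))=d(p,w)$ is imposed, the bi-Lipschitz constant produced by Le Donne and Xie must depend on the data only through $\eta$. Their argument proceeds via blow-ups and a rigidity analysis that is intrinsically scale invariant, so any residual dependence on $f$ should indeed collapse to the single normalizing ratio $\lambda$; this is the sense in which the lemma is an easy corollary of their result.
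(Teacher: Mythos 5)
Your proposal is correct and follows essentially the same route as the paper: both reduce to \cite[Theorem 1.1]{LeDonneXie2015} and use the marked pair $(p,w)$ from \eqref{eq:C-condition} to eliminate the one free multiplicative constant. The ``obstacle'' you flag is resolved by the form in which the paper quotes that theorem --- it yields a quasi-similarity estimate $MK^{-1}d(x,y)\le\roo(f(x),f(y))\le MKd(x,y)$ with $K=K(\eta)$ and $M>0$ uncontrolled; evaluating at $(p,w)$ gives $(CK)^{-1}\le M\le CK$, so $f$ is $CK^2$-bi-Lipschitz, which is exactly your normalization argument written without rescaling the metric.
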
 

\begin{proof}
  By \cite[Theorem 1.1]{LeDonneXie2015}, there are constants $K \ge 1$ and $M > 0$ such that $K$ depends only on $\eta$ and $f$ satisfies
  \begin{equation} \label{eq:quasi-similarity}
    MK^{-1} d(x,y) \le \roo(f(x),f(y)) \le MK d(x,y)
  \end{equation}
  for all $x,y \in X$. Let $p$ and $w$ be the points of the condition \eqref{eq:C-condition}. Since \eqref{eq:quasi-similarity} and \eqref{eq:C-condition} give
  \begin{equation*}
    MK^{-1} \leq \frac{ \roo(f(p),f(w)) }{ d(p,w) } \le C
  \end{equation*}
  and similarly $C^{-1} \le MK$ we get $(C K)^{-1} \leq M\leq C K$. Thus $f$ satisfies
  \begin{equation*}
    C^{-1} K^{-2} d(x,y) \leq \roo(f(x),f(y)) \leq C K^2 d(x,y)
  \end{equation*}
  for all $x,y\in X$, which is what we wanted to show.
\end{proof}

In \S \ref{sec:horizontal-carpets}, we will need the lemma in the form where the spaces are unions of fibered spaces. The task is thus to show that each restriction of the $(C,\eta)$-quasisymmetric mapping to the fibered space is $(C',\eta)$-quasisymmetric for some $C'$ depending only on $\eta$ and $C$. The claim of the lemma then follows by applying Lemma \ref{lem:FiberedBiLipConstants} on each fibered space and gluing the obtained bi-Lipschitz mappings together.

\begin{lemma} \label{lem:manyfiberedcomponents}
  Suppose that $(X,d)$ and $(Y,\roo)$ are metric spaces such that $X=\bigcup_{i\in I} F_i$ and $Y=\bigcup_{j\in J} G_i$, where $F_i$ and $G_j$ are fibered spaces (in the restriction metrics). If $f\colon X\to Y$ is $(C,\eta)$-quasisymmetric such that it maps fibers of each $F_i$ homeomorphically onto fibers of some $G_j$, then $f$ is $L$-bi-Lipschitz, where $L$ depends only on $\eta$ and $C$.
\end{lemma}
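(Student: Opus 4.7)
The strategy, already sketched in the paragraph before the statement, is a two-step reduction: restrict $f$ to each fibered component $F_i$, verify that this restriction satisfies the hypotheses of Lemma~\ref{lem:FiberedBiLipConstants} with constants depending only on $C$ and $\eta$, and then reassemble the individual bi-Lipschitz estimates into a global one.

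As a preliminary, I would show that $f$ sends each $F_i$ into a single $G_{j(i)}$: by hypothesis every fiber of $F_i$ lands in a fiber of some $G_{j}$, and properties~\ref{F2} and~\ref{F4}, together with the continuity of $f$, force neighbouring parallel fibers of $F_i$ to land in fibers of the same $G_{j}$, so that there is a unique $j(i) \in J$ with $f(F_i) = G_{j(i)}$. Each restriction $f_i := f|_{F_i} \colon F_i \to G_{j(i)}$ is then a fiber-preserving $\eta$-quasisymmetric homeomorphism. The core of the argument is to prove that every such $f_i$ is $(C', \eta)$-quasisymmetric with $C'$ depending only on $C$ and $\eta$. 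Starting from the global reference pair $(p,w) \in X \times X$ supplied by the $(C,\eta)$-condition, the plan is to pick any $p_i \in F_i$ and, using the unboundedness of the fiber through $p_i$ (property~\ref{F1}), select $w_i$ on that fiber with $d(p_i, w_i)$ equal to a carefully chosen scale. Two or three applications of the three-point $\eta$-quasisymmetry inequality, comparing the triples built from $p, w, p_i, w_i$, should then bound $\rho(f(p_i), f(w_i))/d(p_i, w_i)$ from above and below purely in terms of $C$ and $\eta$.

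This uniform selection of a $(C', \eta)$-pair inside every $F_i$ is the main obstacle I expect: the global reference pair $(p,w)$ need not lie inside any single $F_i$, and if $F_i$ sits far from both $p$ and $w$ one must choose the scale $d(p_i,w_i)$ so that any ``non-quasisymmetric'' factor such as $d(p_i, p)/d(p,w)$ is absorbed by a single $\eta$-evaluation rather than growing unboundedly. The flexibility provided by \ref{F1} is exactly what makes this possible. Once this is in place, Lemma~\ref{lem:FiberedBiLipConstants} yields a common bi-Lipschitz constant $L = L(C, \eta)$ for all $f_i$, and the final gluing step handles pairs $(x,y) \in F_i \times F_{i'}$ with $i \ne i'$ by picking an auxiliary point $z \in F_i$ on the same fiber as $x$ with $d(x,z)$ comparable to $d(x,y)$, then combining the within-$F_i$ estimate for $(x,z)$ with the global three-point quasisymmetry inequality to control $\rho(f(x), f(y))$ in terms of $d(x,y)$ with a constant $L' = L'(C, \eta)$.
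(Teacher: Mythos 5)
Your overall architecture coincides with the paper's: restrict $f$ to each fibered component, show each restriction is $(C',\eta)$-quasisymmetric with $C'$ depending only on $\eta$ and $C$, apply Lemma \ref{lem:FiberedBiLipConstants} componentwise, and glue using an auxiliary point $z\in F_i$ with $d(x,z)=d(x,y)$; that final gluing step is exactly the paper's. The preliminary observation that all of $F_i$ must land in a single $G_{j(i)}$ is a sensible addition which the paper leaves implicit.

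There is, however, a genuine gap in the central step. A chain of quasisymmetry inequalities through the four points $p,w,p_i,w_i$ alone cannot yield a bound depending only on $\eta$ and $C$. Concretely, set $s=d(p,p_i)/d(p,w)$ and let $u=d(p_i,w_i)/d(p_i,p)$ be the quantity you control by choosing the scale of $w_i$; anchoring $\roo(f(p_i),f(w_i))$ to the normalized pair $(p,w)$ via the triples $(p_i,w_i,p)$ and $(p,p_i,w)$ gives $\roo(f(p_i),f(w_i))/d(p_i,w_i)\le \eta(u)\eta(s)C/(us)$, and $\eta(s)/s$ is unbounded in $s$ for a general control function (e.g.\ $\eta(t)=t^2$ for large $t$), so no choice of scale absorbs it. The structural reason is that \eqref{eq:C-condition} normalizes $f$ only at the single scale $d(p,w)$ near the single location $p$, and quasisymmetry by itself cannot transport absolute scale information across a large distance. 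The paper's remedy is to interpose Lemma \ref{lem:FiberedBiLipConstants} \emph{before} transporting: if $w\notin F_{i_0}\ni p$, first replace $w$ by $v\in F_{i_0}$ with $d(v,p)=d(p,w)$ (cost: a factor $\eta(1)$), conclude that $f|_{F_{i_0}}$ is $(\eta(1)C,\eta)$-quasisymmetric and hence $L_1$-bi-Lipschitz with $L_1=L_1(\eta,C)$. Since bi-Lipschitzness holds at \emph{every} scale on $F_{i_0}$, for a far component $F_{i'}$ one takes $x\in F_{i_0}$, $u\in F_{i'}$ and uses \ref{F1} to choose $y\in F_{i_0}$, $v\in F_{i'}$ with $d(y,x)=d(u,x)=d(u,v)$; then every $\eta$ in the chain is evaluated at $1$, and the remaining factor $\roo(f(y),f(x))/d(y,x)$ is bounded by $L_1$ at precisely the scale $d(u,x)$. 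This all-scales upgrade on one component before propagating is the missing idea; \ref{F1} alone does not supply it.
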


\begin{proof}
  By the assumption, there exist points $p,w \in X$ so that $C^{-1} d(p,w)\leq \roo(f(p),f(w)) \leq C d(p,w)$. Let us first assume that $p,w\in F_i$ for some $i\in I$. Since $f|_{F_i}$ is $(C,\eta)$-quasisymmetric and fibers of $F_i$ are mapped homeomorphically onto fibers of $G_j$, we get, by Lemma \ref{lem:FiberedBiLipConstants}, that $f|_{F_i}$ is $L_1$-bi-Lipschitz, where $L_1$-depends only on $\eta$ and $C$. To show that also $f|_{F_{i'}}$ is bi-Lipschitz, we seek points $u,v\in F_{i'}$ so that $f|_{F_{i'}}$ satisfies condition \eqref{eq:C-condition} for $u,v$ and some $C'$ depending on only $\eta$ and $C$. Fix $x\in F_i$ and $u\in F_{i'}$. Since fibers of $F_i$ and $F_{i'}$ are unbounded geodesic spaces, we may choose $y\in F_i$ and $v\in F_{i'}$ so that $d(y,x)=d(u,x)=d(u,v)$. Now 
  \begin{align*}
    \frac{\roo(f(u),f(v))}{d(u,v)}
    &=\frac{ \roo(f(u),f(v)) }{ \roo(f(u),f(x)) } \frac{ \roo(f(u),f(x)) }{ \roo(f(y),f(x)) } \frac{\roo(f(y),f(x))}{d(y,x)} \frac{ d(y,x) }{ d(u,v) }\\
    &\leq \eta\biggl( \frac{ d(u,v) }{ d(u,x) } \biggr) \eta\biggl( \frac{ d(u,x) }{ d(y,x) } \biggr) L_1 = \eta(1)^2 L_1
  \end{align*}
  and, similarly,
  \begin{equation*}
    \frac{d(u,v)}{\roo(f(u),f(v))} \leq \eta(1)^2 L_1.
  \end{equation*}
  Thus $f|_{F_{i'}}$ is $(C',\eta)$-quasisymmetric where $C'=\eta(1)^2 L_1$ depends only on $\eta$ and $C$. Lemma \ref{lem:FiberedBiLipConstants} implies now that $f|_{F_{i'}}$ is $L_2$-bi-Lipschitz for some $L_2 \ge 1$ depending only on $\eta$ and $C$.

  Let us then assume that $p\in F_{i}$ and $w\in F_{i'}$ for some $i\neq i'$. Choose $v\in F_{i}$ so that $d(v,p)=d(p,w)$. Now
  \begin{align*}
  \frac{ \roo(f(v),f(p)) }{ d(v,p) }
  &= \frac{ \roo(f(v),f(p)) }{ \roo(f(w),f(p)) } \frac{\roo(f(w),f(p))}{d(p,w)} \frac{d(p,w)}{d(v,p)}
  \leq \eta\biggl( \frac{ d(v,p) }{ d(w,p) } \biggr) C
  = \eta(1) C
  \end{align*}
  and, similarly,
  \begin{equation*}
    \frac{d(v,p)}{\roo(f(v),f(p))} \leq \eta(1) C.
  \end{equation*}
  Thus $f|_{F_{i}}$ is $(C'',\eta)$-quasisymmetric where $C''=\eta(1)C$ depends only on $\eta$ and $C$. Now, continuing as in the first part of the proof, we conclude that there exists $L \ge 1$ depending only on $\eta$ and $C$ such that $f|_{F_i}$ is $L$-bi-Lipschitz for all $i \in I$.

  It remains to glue the mappings $f|_{F_i}$ together. Fix $x\in F_i$ and $y\in F_{i'}$. Choose $z\in F_i$ so that $d(x,y)=d(x,z)$. Now we have
  \begin{equation*}
    \frac{\roo( f(x) , f(y) )}{d(x,y)}
    = \frac{\roo( f(x) , f(y) )}{\roo( f(x) , f(z) )} \frac{\roo( f(x) , f(z) )}{d(x,z)} \frac{d(x,z)}{d(x,y)}
    \leq \eta\biggl( \frac{ d(x,y) }{ d(x,z) } \biggr) L =\eta(1)L
  \end{equation*}
  and, similarly,
  \begin{equation*}
    \frac{d(x,y)}{\roo( f(x) , f(y) )}\leq \eta(1)L.
  \end{equation*}
  Therefore $f$ is bi-Lipschitz with a constant $\max\{L,\eta(1)L\}$ which only depends on $\eta$ and $C$.
\end{proof}

\section{Horizontal self-affine carpets} \label{sec:horizontal-carpets}

In this section, we apply Theorem \ref{thm:quasilip} in a concrete setting. We consider horizontal self-affine carpets on the plane. We will first show, generalizing the result of Bandt and K\"aenm\"aki \cite[Theorem 1]{BandtKaenmaki2013}, that all weak tangents of such sets are unions of fibered spaces. We remark that \cite[Theorem 1]{BandtKaenmaki2013} is recently generalized in another direction by K\"aenm\"aki, Koivusalo, and Rossi; see \cite[Theorem 3.1]{KaenmakiKoivusaloRossi2015}. It is essential for us that we get the result for all weak tangents, not just for almost all tangents as in the above mentioned results. Recall that a set $E \subset \R^2$ is \emph{porous} with a constant $0<\alpha<1$ if for every $x \in E$ and $r>0$ there exists $y \in \R^2$ such that $B(y,\alpha r) \subset B(x,r)\setminus E$.

\begin{theorem} \label{thm:BK_general}
  If $E$ is a horizontal self-affine carpet, then weak tangents of $E$ are of the form
  \begin{equation*}
    (-\infty,w] \times \CL \cup [w,\infty) \times \CR,
  \end{equation*}
  where $\CL$ and $\CR$ are uniformly perfect porous sets, and at least one of them is nonempty.
\end{theorem}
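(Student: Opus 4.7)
The plan is to construct the weak tangent explicitly by unfolding the self-affine structure level by level. Write the IFS defining $E$ as $\{T_i\}_{i\in\Lambda}$, where $T_i$ has horizontal contraction $a_i\in(0,1)$ and vertical contraction $b_i<a_i$. Given a basepoint sequence $(p_i)\subset E$ and scales $r_i\downarrow 0$ that realize the weak tangent, fix for each $i$ an address $\omega^{(i)}\in\Lambda^{\N}$ of $p_i$ and choose the level $n_i$ so that the horizontal side $a_{\omega^{(i)}_1}\cdots a_{\omega^{(i)}_{n_i}}$ of the level-$n_i$ cylinder containing $p_i$ is comparable to $r_i$. Since $b_j<a_j$, the vertical side of this cylinder is of order $o(r_i)$, so in the rescaled metric $d/r_i$ each deep cylinder appears as a thin, almost-horizontal slab.

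By the compactness of $\Lambda$, pass to a subsequence so that for every $k\ge 0$ the suffix $\omega^{(i)}_{n_i-k}\omega^{(i)}_{n_i-k+1}\cdots\omega^{(i)}_{n_i}$ stabilizes, producing a left-infinite word $\tilde\omega$ that encodes the nested cylinders around $p_i$ in the limit. Also pass to a further subsequence so that the rescaled horizontal offset of $p_i$ from the left edge of its level-$n_i$ cylinder converges to some $\tilde w\in[0,1]$. Then the rescaled level-$(n_i-k)$ cylinder converges, on bounded sets, to the image of $E$ under an affine map that stretches horizontally by $1/(a_{\tilde\omega_{-k+1}}\cdots a_{\tilde\omega_0})$ and contracts vertically by a factor going to $0$. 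The vertical line condition guarantees that the horizontal projection of this rescaled cylinder fills an interval, while the vertical cross-section at any fixed horizontal coordinate is a Cantor-type set, arising as the Hausdorff limit of appropriately rescaled vertical slices of $E$ near $p_i$. Letting $k\to\infty$, the horizontal extent grows to an unbounded half-line (or the full line).

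The tangent is therefore built up as a union of strips of the form ``horizontal half-line $\times$ vertical Cantor set.'' The split coordinate $w$ arises as the rescaled limit of the horizontal distance from $p_i$ to the only macro-scale horizontal boundary of the carpet that can survive at bounded rescaled distance in the limit: since $b_j<a_j$, horizontal cylinder boundaries at level $m\le n_i$ lie at rescaled distance of order $a^{n_i-m}$ from $p_i$, so those with $n_i-m\to\infty$ are pushed to infinity unless $p_i$ hits them exactly, and self-similarity of the carpet across levels ensures that the remaining boundaries collapse to at most one genuine split. The left and right neighbours of this single split can have distinct vertical structures, which appear in the tangent as the Cantor sets $\CL$ and $\CR$; at least one of them is nonempty because the basepoint itself survives in the tangent. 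Uniform perfectness of $\CL$ and $\CR$ follows since every cylinder contains at least two vertically separated sub-cylinders, while porosity follows from the disjointness and separation of the cylinders at every level. The main obstacle is the careful tracking of boundary structure across levels needed to see that only one horizontal split survives asymptotically---this is precisely the manifestation of the horizontal nature of the carpet ($b_j<a_j$) at the level of weak tangents, and is where the generalization from almost-every tangent \cite{BandtKaenmaki2013} to every weak tangent requires real work.
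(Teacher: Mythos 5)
Your high-level picture is the right one and matches the paper's: locally the carpet looks like a product of a long horizontal interval with a vertical Cantor set, split along at most one vertical line, and this is driven by $b_j<a_j$. But the two quantitative steps that make this work are missing or wrong, and they are concentrated exactly where you flag the main obstacle. First, the normalization: you choose $n_i$ so that the \emph{horizontal} side of the cylinder around $p_i$ is comparable to $r_i$. The relevant level is instead the one where the \emph{vertical} side is comparable to $r_i$ (the paper's $n(\iii,t)$, Lemma \ref{lem:littlethings}); there the horizontal side is at least $\alpha_2(\iii|_t)\beta^{-n(\iii,t)}\gg t$, and this divergence of the width-to-height ratio is what every estimate rests on. With your normalization the level-$n_i$ cylinders have rescaled width of order $1$, so a bounded window contains many of them and many of their horizontal endings, all at mutual rescaled distance $O(1)$, and your argument has to dispose of every one of these.

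Second, the ``only one split survives'' step fails as written. You claim level-$m$ endings lie at rescaled distance of order $a^{-(n_i-m)}$ from $p_i$ and hence escape to infinity unless $p_i$ hits them exactly. This conflates the mutual separation of distinct ending lines with the distance from $p_i$ to the nearest one: the latter can be of order $r_i$ at any level, so an ending can survive at bounded rescaled distance without $p_i$ lying on it --- indeed that is precisely how tangents with $\CL\ne\CR$ arise (Example \ref{ex:CLCR}), so an argument ruling this out except when $p_i$ hits a boundary proves too much. The correct statement, which is the content of Lemma \ref{lem:endings}, is that the ending lines of the level-$(n(\iii,t)+K)$ rectangles have mutual horizontal separation at least $3t$ (again because width/height $\ge\beta^{-n(\iii,t)}\to\infty$), hence \emph{at most one} of them meets $B(\pi(\iii),t)$; one cannot do better than that. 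Relatedly, you never show that the vertical cross-section is the \emph{same} set at every horizontal coordinate on a given side of $w$ --- only that each cross-section is some Cantor-type limit. The product structure requires combining the single-ending statement with $d_H(E_{\iii|_t\jjj},Q_{\iii|_t\jjj})\le\delta^{-1}t\ualpha^K$, which follows from condition \ref{H:projection} and shows that the pattern of level-$(n(\iii,t)+K)$ rectangles seen along a vertical line is constant on each side of the unique ending up to a Hausdorff error that vanishes after rescaling (Lemma \ref{lem:epspatterns}). The remaining ingredients --- porosity and uniform perfectness of the vertical slices and their persistence under Hausdorff limits, and nonemptiness because the basepoint survives --- you state essentially correctly.
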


Relying on this result, Theorem \ref{thm:quasilip} together with Lemma \ref{lem:manyfiberedcomponents} unveils that the class of quasisymmetric mappings between two horizontal self-affine carpets is rigid and the proof of Theorem \ref{thm:horizontal_carpet} follows. In particular, by Lemma \ref{lem:quasiliphausdorff}, such a quasisymmetric mapping can exist only when $E$ and $F$ have the same dimension. Let us now start adding details for these claims.

We will first define horizontal self-affine carpets. Let $\Phi=\{\fii_i\}_{i=1}^N$ be a collection of contractive self-maps on $\R^2$. The collection $\Phi$ is called an \emph{iterated function system (IFS)}. By Hutchinson \cite{Hutchinson1981}, there exists a unique non-empty compact set $E$, called the \emph{invariant set} of the iterated function system, satisfying
\[
  E = \bigcup_{i=1}^N \fii_i(E).
\]
Since we are interested in dimensional properties of $E$ we may assume that $\diam(E) = 1$. If the images $\fii_i(E)$ are pairwise disjoint, then the IFS is said to satisfy the \emph{strong separation condition (SSC)}. In the study of iterated function systems, it is often convenient to use the following notation. Let $N \ge 2$ be an integer and let $\Sigma_n = \{ 1,\ldots,N \}^n$ be the collection of all sequences of length $n$ formed from the set $\{ 1,\ldots,N \}$. If $\iii = (i_1,\ldots,i_n) \in \Sigma_n$, then we write $|\iii|=n$ and $\iii^- = (i_1,\ldots,i_{n-1})$. The set of all finite sequences $\bigcup_{n \in \N} \Sigma_n$ is denoted by $\Sigma_*$ and the set of all infinite sequences $\{ 1,\ldots,N \}^\N$ is denoted by $\Sigma$. If $\iii = (i_1,i_2,\ldots)$, then we write $\iii|_n = (i_1,\ldots,i_n) \in \Sigma_n$ for all $n \in \N$. The concatenation of two sequences $\iii$ and $\jjj$ is denoted by $\iii\jjj$. The correspondence between the invariant set $E$ and the set $\Sigma$ is given by the surjective mapping $\pi\colon \Sigma \to E$, defined by the relation
\[
  \{ \pi(\ii) \} = \bigcap_{n\in\N} \fii_{i_1} \circ \fii_{i_2} \circ\dots\circ \fii_{i_n}(K),
\]
where $K$ is any non-empty compact set satisfying $\bigcup_{i=1}^N \fii_i(K) \subset K$. If $\iii = (i_1,\ldots,i_n) \in \Sigma_n$ for some $n$, then we write $\fii_\iii = \fii_{i_1} \circ \dots \circ \fii_{i_n}$ and $E_\ii = \fii_{\ii}(E)$. The sets $E_\ii$, and the corresponding sets $[\ii] = \{ \jj\in\Sigma : \jjn{|\ii|} = \ii\}$, are the \emph{cylinder sets} of level $\ii$. 

We assume that all the mappings of the IFS are invertible and affine such that the linear parts are diagonal. Denoting $\fii_i(x_1,x_2) = (\alpha_1(i) x_1,\alpha_2(i) x_2) + (b_1(i),b_2(i))$ we thus have
\begin{align*}
  \ua &= \max\{ \alpha_1(i) : i \in \{ 1,\ldots,N\} \} < 1, \\
  \la &= \min\{ \alpha_2(i) : i \in \{ 1,\ldots,N\} \} > 0.
\end{align*}
In this case, the invariant set $E$ is called a \emph{self-affine carpet}. It is \emph{horizontal} if the corresponding IFS satisfies the SSC and the following two conditions hold:
\begin{labeledlist}{H}
 \item\label{H:horizontal} For each $i\in \{ 1,\ldots,N \}$ we have $\alpha_1(i) > \alpha_2(i)$.
 \item\label{H:projection} Every vertical line that intersects $X$, the convex hull of $E$, intersects $\fii_i(X)$ for at least two different $i\in \{ 1,\ldots,N \}$.
\end{labeledlist}
Observe that any Gatzouras-Lalley carpet satisfying the SSC and \ref{H:projection} is a horizontal self-affine carpet. By the iterative structure and compactness, \ref{H:projection} implies that each vertical line that intersects $X$, also intersects $E$. Thus the projection of $E$ onto the horizontal coordinate is a line segment. For a more detailed argument, see, for example, \cite[Remark 3.3]{KaenmakiKoivusaloRossi2015}. Furthermore, if
\begin{equation*} 
  \beta = \max\{ \alpha_2(i)/\alpha_1(i) : i \in \{ 1,\ldots,N \} \},
\end{equation*}
then the condition \ref{H:horizontal} implies that $0<\beta<1$. Finally, the SSC guarantees that
\begin{equation*}
  \delta = \min_{i\neq j}\{ \dist(\fii_i(E),\fii_j(E))\}>0.
\end{equation*}
To finish the preliminaries on horizontal self-affine carpets, let us introduce some more notation. Since the linear part of $\fii_i$ is $\diag(\alpha_1(i),\alpha_2(i))$ we see that the linear part of $\fii_\iii$ is $\diag(\alpha_1(\iii),\alpha_2(\iii))$, where $\alpha_j(\iii) = \alpha_j(i_1) \cdots \alpha_j(i_n)$ for all $\iii = (i_1,\ldots,i_n) \in \Sigma_n$. We define
\begin{align*}
  n(\iii,t) &= \max\{ n \in \N : E_\jjj \cap B(\pi(\iii),t) = \emptyset \text{ for all } \jjj \in \Sigma_n \setminus \{ \iii|_n \} \}, \\
  n^*(t) &= \min\{ n \in \N : \ualpha^n < t \}, \\
  n_*(t) &= \max\{ n \in \N : \lalpha^n \delta > t \},
\end{align*}
for all $\ii\in \Sigma$ and $0<t<1$. The existence of $n(\iii,t)$ is guaranteed by the SSC. It is easy to see that $n(\ii,t)$, $n^*(t)$, and $n_*(t)$ increase as $t$ decreases to zero. We abbreviate $\iin{n(\ii,t)}$ by $\iin{t}$.

\begin{lemma} \label{lem:littlethings}
  For every $\iii \in \Sigma$ and $0<t<1$ it holds that
  \begin{enumerate}
    \item\label{nstar} $n_{*}(t) \leq n(\ii,t) \leq n^{*}(t)$,
    \item\label{ratio} $\la t \leq \alpha_2(\iin{t})  \leq \delta^{-1}t$.
  \end{enumerate}
\end{lemma}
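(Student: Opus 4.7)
The plan splits into four steps, one for each of the four inequalities. For the lower bound $n_*(t) \le n(\iii, t)$ in part (1), I will fix $n = n_*(t)$ and, given $\jj \in \Sigma_n \setminus \{\iin{n}\}$, let $k < n$ be the largest index with $\jj|_k = \iin{k}$. Since $\fii_{\iin{k}}$ has diagonal linear part whose smallest scaling equals $\alpha_2(\iin{k})$, the strong separation condition produces the sibling bound $\dist(\pi(\iii), E_\jj) \ge \alpha_2(\iin{k}) \delta \ge \lalpha^k \delta$, which exceeds $t$ because $k < n_*(t)$ and $\lalpha^{n_*(t)} \delta > t$ by the definition of $n_*(t)$.

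For the upper bound $n(\iii, t) \le n^*(t)$ I will use condition \ref{H:projection}. The vertical line $L$ through $\pi(\iii)$ meets $\fii_{\iin{n^*(t)}}(X)$, so applying \ref{H:projection} to $\fii_{\iin{n^*(t)}}^{-1}(L)$ and pushing forward shows $L$ meets at least two children $\fii_{\iin{n^*(t)} \cdot i}(X)$; pick one, say $\fii_{\jj}(X)$ with $\jj = \iin{n^*(t)} \cdot m$ and $m \ne i_{n^*(t)+1}$, distinct from $\fii_{\iin{n^*(t)+1}}(X)$. Iterating \ref{H:projection} shows that the horizontal projection of each $E_\kkk$ equals that of its convex hull $\fii_\kkk(X)$, so there exists $p \in E_\jj$ with the same horizontal coordinate as $\pi(\iii)$. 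Since both $p$ and $\pi(\iii)$ lie in $\fii_{\iin{n^*(t)}}(X)$, whose vertical extent is at most $\alpha_1(\iin{n^*(t)}) \le \ualpha^{n^*(t)} < t$ by \ref{H:horizontal} and $\diam X = 1$, we have $p \in B(\pi(\iii), t)$; hence $E_\jj$ is a bad cylinder at level $n^*(t)+1$.

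The upper bound in part (2) comes from analysing the failure of the defining property at level $n(\iii,t)+1$. Picking a bad $\jj \in \Sigma_{n(\iii,t)+1} \setminus \{\iin{n(\iii, t)+1}\}$ with $E_\jj \cap B(\pi(\iii),t) \ne \emptyset$, the monotonicity observation $E_{\jj'} = \bigcup_{\kkk|_n = \jj'} E_\kkk$ forces the property to hold at every level $\le n(\iii,t)$, which in turn forces $\jj$ to agree with $\iin{n(\iii,t)+1}$ on the first $n(\iii, t)$ coordinates. Applying the sibling bound of the first paragraph at the branching point $n(\iii, t)+1$ yields $\alpha_2(\iin{t}) \delta \le \dist(\pi(\iii), E_\jj) \le t$, proving $\alpha_2(\iin{t}) \le \delta^{-1} t$.

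The lower bound $\alpha_2(\iin{t}) \ge \lalpha t$ is the most delicate step and is where I expect the main obstacle to lie: the crude estimate $\alpha_2(\iin{n(\iii,t)}) \ge \lalpha^{n(\iii, t)} \ge \lalpha^{n^*(t)}$ does not reach $\lalpha t$, because $n^*(t)$ is controlled by $\ualpha$ rather than $\lalpha$. To sidestep this, I will let $m$ be the smallest integer with $\alpha_2(\iin{m}) \le t$; minimality gives $\alpha_2(\iin{m}) = \alpha_2(\iin{m-1}) \alpha_2(i_m) > t \cdot \lalpha$. Reapplying the vertical line construction at level $m$ produces a point $p \in E_{\iin{m} \cdot m'}$ (for some $m' \ne i_{m+1}$) on the vertical line through $\pi(\iii)$; both points lie in $E_{\iin{m}}$ of vertical extent $\alpha_2(\iin{m}) \le t$, so $p \in B(\pi(\iii), t)$. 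This forces $n(\iii, t) \le m$, and hence $\alpha_2(\iin{t}) \ge \alpha_2(\iin{m}) > \lalpha t$. Without condition \ref{H:projection} this argument would collapse, because a thin vertical column could leave $\pi(\iii)$ with no comparable vertical neighbour in $E$ at scale $t$.
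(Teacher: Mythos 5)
Your proof is correct and follows essentially the same route as the paper's: the sibling separation bound $\dist(E_{\kkk i},E_{\kkk j})\ge\alpha_2(\kkk)\delta$ yields both $n_*(t)\le n(\iii,t)$ and $\alpha_2(\iin{t})\le\delta^{-1}t$, and the vertical-line argument via condition \ref{H:projection} applied at the last level where $\alpha_2$ still exceeds $t$ yields $\alpha_2(\iin{t})\ge\lalpha t$, exactly as in the paper. The only (harmless) divergence is your treatment of $n(\iii,t)\le n^*(t)$: the paper simply notes that $\fii_{\iin{n^*(t)}}(E)$ has diameter less than $t$ and hence lies in $B(\pi(\iii),t)$ together with all $N\ge 2$ of its level-$(n^*(t)+1)$ subcylinders, so condition \ref{H:projection} is not actually needed for that inequality.
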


\begin{proof}
  \eqref{nstar} If $\ua^{n}<t$, then $\fii_{\iin{n}}(E)\subset B(\pi(\ii),t)$. Thus $B(\pi(\iii),t)$ contains two cylinders of level $n+1$ and therefore, $n(\ii,t) \leq n$. To show the other inequality, observe first that the distances between cylinders of level $n+1$ are at least $\la^{n}\delta$. If $\la^{n}\delta>t$, then $B(\pi(\ii),t)$ intersects $E_{\iin{n}i}$ for some $i\in\{1,\ldots,N\}$ but it can not intersect another cylinder of level $n+1$. Thus $n \leq n(\ii,t)$.
 
  \eqref{ratio} Observe first that we have $\la\alpha_2(\iin{t}^-) \leq \alpha_2(\iin{t})$. Since any vertical line through $E_{\iin{t}}$ intersects at least two of its sub-cylinders we have $t<\alpha_2(\iin{t}^-)$ yielding the first inequality. Since $B(\pi(\ii),t)$ intersects at least two sub-cylinders of $E_{\iin{t}}$ we have $\alpha_2(\iin{t})\delta<t$. Thus also the second inequality holds.
\end{proof}

Let $Q=[h,h']\times[v,v']$ be the smallest closed rectangle containing $E$ with sides parallel to the coordinate axis. Observe that, since we assumed $\diam (E) = 1$, the height of $Q$ is at most $1$. We denote $Q_\ii=\fii_\ii(Q)$ and call $Q_\ii$ a \emph{construction rectangle} of level $|\ii|$. By \emph{horizontal endings} of a rectangle $Q_\ii$ we mean the vertical line segments $\fii_\ii( \{h\}\times [v,v'])$ and $\fii_\ii( \{h'\}\times [v,v'])$. Note that even though the SSC means that sets $\fii_\iii(E)$ and $\fii_\jj(E)$ are disjoint for $\iii$ and $\jjj$ with $[\iii] \cap [\jjj] = \emptyset$, it does not imply that $Q_\iii$ and $Q_\jjj$ are disjoint. This is not a problem since the crucial thing is that $Q_\iii$ approximates $\fii_\ii(E)$ well.

\begin{lemma} \label{lem:endings}
  For every $K\in\N$ there exists $t_K>0$ such that for any $0<t<t_K$ and $\iii \in \Sigma$ the ball $B(\pi(\iii),t)$ intersects at most one vertical line containing a horizontal ending of some rectangle $Q_{\iin{t}\jj}$ with $|\jj|=K$.
\end{lemma}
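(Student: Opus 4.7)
The plan is to use the horizontality condition \ref{H:horizontal} to show that, for $t$ small, the rectangle $Q_{\iin{t}}$ is so wide compared to $t$ that, even after subdividing it into level-$K$ subrectangles, any two distinct vertical lines carrying horizontal endings are separated by more than $2t$. Since $B(\pi(\iii),t)$ has horizontal projection of length $2t$, it will then meet at most one such line.

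First I would isolate the combinatorial input. Let $P_K\subset[h,h']$ denote the finite set of horizontal positions of horizontal endings of all rectangles $Q_\jj$ with $|\jj|=K$, viewed inside the original $Q$. Since each individual $Q_\jj$ already contributes two distinct positions, $|P_K|\ge 2$, and I set $d_K:=\min\{|x-y|:x,y\in P_K,\ x\ne y\}>0$. Applying $\fii_{\iin{t}}$, whose horizontal part is the affine map $x_1\mapsto\alpha_1(\iin{t})x_1+b_1(\iin{t})$, the horizontal positions of horizontal endings of the rectangles $Q_{\iin{t}\jj}$ with $|\jj|=K$ form the rescaled set $\{\alpha_1(\iin{t})x+b_1(\iin{t}):x\in P_K\}$, so any two distinct such positions lie at horizontal distance at least $\alpha_1(\iin{t})\,d_K$.

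The heart of the argument is then to show that $\alpha_1(\iin{t})/t\to\infty$ uniformly in $\iii$ as $t\to 0$. Condition \ref{H:horizontal} gives $\alpha_2(i)/\alpha_1(i)\le\beta<1$ for every $i$, so together with Lemma \ref{lem:littlethings} one obtains
\begin{equation*}
\alpha_1(\iin{t}) \ge \beta^{-n(\ii,t)}\alpha_2(\iin{t}) \ge \lalpha\,\beta^{-n(\ii,t)}\,t \ge \lalpha\,\beta^{-n_*(t)}\,t.
\end{equation*}
Since $\beta<1$ and $n_*(t)\to\infty$ as $t\to 0$, choosing $t_K>0$ so that $\lalpha\beta^{-n_*(t)}d_K>2$ for every $0<t<t_K$ yields $\alpha_1(\iin{t})d_K>2t$, which is precisely what is needed. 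The only real obstacle is recognizing that horizontality forces this uniform growth of $\alpha_1(\iin{t})/t$; once it is in place, the lemma follows by a short calculation.
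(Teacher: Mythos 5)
Your proposal is correct and follows essentially the same route as the paper's proof: both define the minimal positive horizontal gap between level-$K$ endings in the template $Q$, rescale by $\alpha_1(\iin{t})$, and use $\alpha_2(\ii)/\alpha_1(\ii)\le\beta^{|\ii|}$ together with Lemma \ref{lem:littlethings} to get the uniform lower bound $\alpha_1(\iin{t})\ge\lalpha\,\beta^{-n_*(t)}t$, which forces the gaps to exceed the diameter of $B(\pi(\iii),t)$ for small $t$. The only differences are cosmetic (the paper arranges the separation to be at least $3t$ rather than your $2t$).
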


\begin{proof}
  Fix $K \in \N$. Observe first that the construction rectangles of level $K$ have $2 N^K$ horizontal endings. Denote the smallest positive horizontal distance between them by $\delta_K>0$ and choose $0<t_K<1$ so that
  \begin{equation*}
    \delta_K \la \beta^{-n_*(t)} \geq 3
  \end{equation*}
  for all $0<t<t_K$. Fix $\iii \in \Sigma$ and $0<t<t_K$. Since $\alpha_2(\iin{t})/\alpha_1(\iin{t}) \leq \beta^{n(\ii,t)}$ Lemma \ref{lem:littlethings} gives
  \begin{equation*}
    \delta_K\alpha_1(\iii|_t) \ge \delta_K\alpha_2(\iii|_t)\beta^{-n(\iii,t)} \ge \delta_K\alpha_2(\iii|_t)\beta^{-n_*(t)} \ge \delta_K\lalpha \beta^{n_*(t)}t \ge 3t.
  \end{equation*}  
  Observe that the level $n(\ii,t)+K$ horizontal endings in $Q_{\iin{t}}$ have horizontal separation either zero or at least $\delta_K\alpha_1(\iii|_t)$. Therefore we conclude that $B(x,t)$ can intersect at most one vertical line containing such a ending.
\end{proof}

If $E \subset \R^2$, then the \emph{vertical slice} of $E$ at $y=(y_1,y_2) \in \R^2$ is
\[
  V_y(E) = \{ x_2 \in \R : (x_1,x_2) \in E \text{ and } x_1=y_1 \}.
\]
Note that $\{ y_1 \} \times V_y(E) = E \cap \{ (y_1,z) \in \R^2 : z \in \R \}$. Since $V_y(E)$ does not depend on $y_2$ we denote it also by $V_{y_1}(E)$.

\begin{lemma} \label{lem:verticalsilices}
  If $E$ is a horizontal self-affine carpet, then every vertical slice of $E$ is porous with a constant $\min\{ \delta, 1 \}/4$ and uniformly perfect with a constant $\delta^{-1} \lalpha^{-k-1}$, where $k$ is the smallest integer with $\ualpha^k<\delta$.
\end{lemma}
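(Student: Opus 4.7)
The plan is to fix $x_2 \in V_{y_1}(E)$, $r > 0$, and an address $\iii \in \Sigma$ with $\pi(\iii) = (y_1, x_2)$, and to produce, for porosity, a vertical gap and, for uniform perfectness, a nearby point of the slice by choosing the level $n$ so that $\alpha_2(\iii|_n)$ sits at the right scale with respect to $r$. The two quantitative inputs are condition~\ref{H:projection}, transported through $\fii_{\iii|_{n-1}}$, which guarantees that the vertical line $x_1 = y_1$ meets at least two sub-cylinders of $E_{\iii|_{n-1}}$, and the SSC, which (since two points on a vertical line have coinciding Euclidean and vertical distance, and $\fii_{\iii|_{n-1}}$ is diagonal) gives $\dist(V_{y_1}(E_{\iii|_{n-1}i}), V_{y_1}(E_{\iii|_{n-1}j})) \ge \alpha_2(\iii|_{n-1})\delta$ for distinct $i, j$.

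For \emph{porosity}, I would let $n$ be the smallest integer with $\alpha_2(\iii|_n) < r$, so $\alpha_2(\iii|_{n-1}) \ge r$ (as in the proof of Lemma~\ref{lem:littlethings}). The sibling sub-cylinder $E_{\iii|_{n-1}j}$ with $j \ne i_n$ provided by~\ref{H:projection} has its slice at vertical distance $\ge r\delta$ from $V_{y_1}(E_{\iii|_n})$. Since $V_{y_1}(E_{\iii|_n}) \ni x_2$ has diameter $< r$, the pigeonhole on the distances $\sup V_{y_1}(E_{\iii|_n}) - x_2$ and $x_2 - \inf V_{y_1}(E_{\iii|_n})$ forces one of them, say the first, to be $< r/2$. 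The gap above $V_{y_1}(E_{\iii|_n})$ therefore contains a sub-interval of length $\ge \min\{r/2, r\delta\} \ge r\min\{\delta, 1\}/2$ inside $(x_2 - r, x_2 + r) \setminus V_{y_1}(E_{\iii|_{n-1}})$.

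The technical heart is to upgrade this to a gap in $V_{y_1}(E)$ itself. Since $V_{y_1}(E) \cap (x_2 - r, x_2 + r) \subset V_{y_1}(E_{\iii|_{n(\iii, r)}})$ by the definition of $n(\iii, r)$, the inclusion $V_{y_1}(E_{\iii|_{n(\iii, r)}}) \subset V_{y_1}(E_{\iii|_{n-1}})$ closes the case $n - 1 \le n(\iii, r)$. Otherwise $\alpha_2(\iii|_{n-2}) \ge \alpha_2(\iii|_{n-1}) \ge r$, and any competing cylinder of level $n - 1$ inside $E_{\iii|_{n(\iii, r)}}$ and different from $E_{\iii|_{n-1}}$ sits at Euclidean distance $\ge \alpha_2(\iii|_{n-2})\delta \ge r\delta$ from $E_{\iii|_{n-1}}$ by the SSC applied at the appropriate branching level. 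Consequently, the portion of our candidate gap within distance $r\delta$ of $V_{y_1}(E_{\iii|_{n-1}})$ remains free of $V_{y_1}(E)$; specifically the sub-interval $(\sup V_{y_1}(E_{\iii|_n}), \sup V_{y_1}(E_{\iii|_n}) + r\delta)$ intersected with $(x_2 - r, x_2 + r)$ still has length $\ge r\min\{\delta, 1\}/2$, and an inscribed ball of radius $r\min\{\delta, 1\}/4$ delivers the claimed porosity constant.

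For \emph{uniform perfectness}, assume $V_{y_1}(E) \setminus B(x_2, r) \ne \emptyset$ and let $m$ be the smallest integer with $\alpha_2(\iii|_m) \le r$, so that $\alpha_2(\iii|_m) \in (\lalpha r, r]$ by the consecutive-ratio bound $\alpha_2(\iii|_m) \ge \lalpha\alpha_2(\iii|_{m-1})$. Applying~\ref{H:projection} inside $E_{\iii|_m}$ yields a sibling sub-cylinder $E_{\iii|_m j}$ with $j \ne i_{m+1}$ met by $x_1 = y_1$, and any point of $V_{y_1}(E_{\iii|_m j}) \subset V_{y_1}(E)$ lies at vertical distance from $x_2$ in $[\alpha_2(\iii|_m)\delta, \alpha_2(\iii|_m)] \subset (\lalpha r\delta, r]$, establishing uniform perfectness with constant $(\lalpha\delta)^{-1}$. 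Since $k \ge 0$ gives $\lalpha^{k+1} \le \lalpha$, this is dominated by $\delta^{-1}\lalpha^{-k-1}$, completing the lemma.
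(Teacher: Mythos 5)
Your proof is correct and uses the same basic ingredients as the paper's argument --- slicing along a symbolic address $\iii$ with $\pi(\iii)=(y_1,x_2)$, the SSC gap $\delta\alpha_2(\cdot)$ for porosity, and \ref{H:projection} for uniform perfectness --- but your choice of working level differs from the paper's, and this reshapes both halves. For porosity the paper works at level $n(\iii,t)$, which by definition already excludes all other branches of $E$ from $B(\pi(\iii),t)$, and then splits into the cases $\alpha_2(\iii|_t)<t/2$ and $\alpha_2(\iii|_t)\ge t/2$; your choice of the smallest $n$ with $\alpha_2(\iii|_n)<r$ avoids the case split but obliges you to supply the extra containment argument comparing $n-1$ with $n(\iii,r)$ and bounding the distance from $E_{\iii|_{n-1}}$ to competing level-$(n-1)$ cylinders, which you carry out correctly. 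For uniform perfectness your level choice $\alpha_2(\iii|_m)\in(\lalpha r,r]$ controls the height of the cylinder directly, so a single application of \ref{H:projection} suffices, whereas the paper must descend $k$ additional levels to compensate for $\alpha_2(\iii|_t)$ possibly being as large as $\delta^{-1}t$; your route yields the sharper constant $(\lalpha\delta)^{-1}\le\delta^{-1}\lalpha^{-k-1}$, which implies the stated one by monotonicity of the uniform perfectness condition in the constant. Both your argument and the paper's silently use the fact, noted after \ref{H:projection}, that a vertical line meeting $\fii_{\jjj}(X)$ also meets $\fii_{\jjj}(E)$.
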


\begin{proof}
  Let us first show that the vertical slices are porous. Let $z\in E$ and fix $t>0$. Let $y=(y_1,y_2)=\pi(\ii)$ be so that $y_1=z_1$ and $y_2\in V_z(E)$. If $\alpha_2(\iin{t}) < t/2$, then by the definition of $n(\ii,t)$ we have
  \[
    V_z(E)\cap (y_2+t/2,y_2+t)=\emptyset
  \]
  and $(y_2+t/2,y_2+t) \subset (y_2-t,y_2+t)$. Furthermore, if $\alpha_2(\iin{t})\geq t/2$, then by the SSC, the distances of level $n(\ii,t)+1$ cylinder sets are at least $\delta\alpha_2(\iin{t}) \geq \delta t/2$. Thus there exists $x_2$ so that
  \[
    V_z(E) \cap (x_2-\delta t/4,x_2+\delta t/4) = \emptyset
  \]
  and $(x_2-\delta t/4,x_2+\delta t/4) \subset (y_2-t,y_2+t)$. Thus $V_z(E)$ is porous with the constant $\min\{ \delta,1 \}/4$.

  Let us then show the uniform perfectness. Let $z\in E$ and fix $y=(y_1,y_2)=\pi(\ii)$ so that $y_1=z_1$ and $y_2\in V_z(E)$. Let $t>0$ be so that
  \[
    V_z(E)\setminus (y_2-t,y_2+t)\neq \emptyset.
  \]
  Let $k$ be the smallest integer with $\ualpha^k < \delta$ and let $\jjj \in \Sigma_k$ be such that $y \in E_{\iii|_t \jjj}$. By the condition \ref{H:projection}, a vertical line through $y$ intersects two sub-cylinders of $E_{\iin{t}\jj}$. Thus there exists a point $x=(y_1,x_2)\in E_{\iin{t}\jj}$ so that it is not contained in the same $n(y,t)+k+1$ level cylinder with $y$. Now, by Lemma \ref{lem:littlethings}\eqref{ratio} and the choice of $k$, we have
  \[
    |x - y| \leq \alpha_2(\iin{t})\ualpha^k \leq \delta^{-1}t\ualpha^k < t
  \]
  and
  \[
    |x - y| \geq \delta \alpha_2(\iin{t})\lalpha^{k} \geq \delta \lalpha^{k+1} t.
  \]
  In other words, $x\in V_z(E) \cap (y_2-t,y_2+t)\setminus (y_2 - \delta \lalpha^{k+1} t, y_2 - \delta \lalpha^{k+1} t)$ and $V_z(E)$ is uniformly perfect with the constant $\delta^{-1} \lalpha^{-k-1}$.
\end{proof}

Let $E \subset \R^2$ be closed, $p\in E$, and $t>0$. Note that the pointed metric space $(E,|\cdot|/t,p)$ is homothetic to $((E-p)/t,|\cdot|,0)$ via the homothety $x \mapsto (x-p)/t$. Here $(E-p)/t = \{ (x-p)/t \in \R^2 : x \in E \}$. Therefore, whenever we consider weak tangents of subsets of $\R^2$, we can always choose the associated bi-Lipschitz embeddings to be this homothety. Recall that the weak tangents are unique up to an isometry. We say that $T\subset\R$ is a \emph{weak vertical slice tangent} of $E$ if there exists a sequence $(y_i)$ of points in $E$ and a sequence $(t_i)$ of positive reals converging to zero such that $((V_{y_i}(E)-\proj_2y_i)/t_i,|\cdot|,0)$ converges to $(T,|\cdot|,0)$. Here $\proj_2$ is the orthogonal projection onto the vertical axis. We make the corresponding choice for the embeddings also in this case.

If $D,F,G \subset \R^2$, then we write
\begin{equation*}
  d_H^D(F,G) = d_H(F \cap D,G \cap D),
\end{equation*}
where $d_H$ is the Hausdorff distance defined in \eqref{eq:Hausdorff_distance}.
Furthermore, if $\II$ is a collection of sets, then we will slightly abuse notation and write $\II$ to denote also the union $\bigcup_{I \in \II} I$.

\begin{lemma} \label{lem:epspatterns}
  For every $K\in\N$ there exists $t_K>0$ such that for any $0<t<t_K$ and $x \in E$ there are $u, w ,v \in \R$ so that
  \[
    d_H^{B(x,t)}( (-\infty,w] \times V_{u}(E) \cup [w,\infty) \times V_{v}(E), E) \leq t \delta^{-1}\ualpha^K.
  \]
\end{lemma}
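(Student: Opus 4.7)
My approach is to extract the approximating two-slice structure from the combinatorial configuration of the level $n(\iii,t)+K$ sub-rectangles of $Q_{\iin{t}}$, where $\iii\in\Sigma$ satisfies $\pi(\iii)=x$. First, apply Lemma \ref{lem:endings} to obtain $t_K>0$ so that for $0<t<t_K$, the ball $B(x,t)$ meets at most one vertical line $\{x_1=w\}$ containing a horizontal ending of some $Q_{\iin{t}\jj}$ with $|\jj|=K$; when no such line exists, I set $w$ at an edge of $B(x,t)$'s horizontal projection so that one side of the approximation is trivial. Since no horizontal ending of a level-$K$ sub-rectangle lies in $(x_1-t,w)$, the set
\[
L=\{\jj\in\Sigma_K : Q_{\iin{t}\jj}\text{ covers }(x_1-t,w)\text{ horizontally}\}
\]
is independent of $y_1\in(x_1-t,w)$, and the slice of $E\cap Q_{\iin{t}}$ at $y_1$ equals $\bigcup_{\jj\in L}V_{y_1}(Q_{\iin{t}\jj}\cap E)$; the analogous set $R$ is defined on the right of $w$.

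Next, I choose $u$ as the horizontal coordinate of some point of $E\cap Q_{\iin{t}\jj_0}$ for a fixed $\jj_0\in L$, ensuring $u\in(x_1-t,w)$, and $v$ analogously on the right. With this choice, $u$ lies in the horizontal range of every $Q_{\iin{t}\jj}$ with $\jj\in L$, because each such range either contains $(x_1-t,w)$ in full or meets it only at $w$. Consequently $V_u(Q_{\iin{t}\jj}\cap E)$ is nonempty for every $\jj\in L$, via \ref{H:projection} applied to the sub-carpet $\fii_{\iin{t}\jj}(E)$, and is contained in the vertical range of $Q_{\iin{t}\jj}$, an interval of length $\alpha_2(\iin{t}\jj)\leq\delta^{-1}t\ualpha^K$ by Lemma \ref{lem:littlethings}\eqref{ratio}.

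The Hausdorff bound is then verified in both directions. For any $y=(y_1,y_2)\in E\cap B(x,t)$, since $B(x,t)$ meets only $E_{\iin{t}}$ among level-$n(\iii,t)$ cylinders, we have $y\in Q_{\iin{t}\jj}$ for some $\jj$, say $\jj\in L$ (WLOG $y_1\leq w$); picking any $y_2'\in V_u(Q_{\iin{t}\jj}\cap E)$ yields $(y_1,y_2')\in(-\infty,w]\times V_u(E)$ with $|y_2-y_2'|\leq\delta^{-1}t\ualpha^K$, as both points lie in the vertical range of $Q_{\iin{t}\jj}$. Conversely, for $(y_1',y_2')\in((-\infty,w]\times V_u(E))\cap B(x,t)$, identify the sub-rectangle $Q_{\iin{t}\jj}$ with $\jj\in L$ whose vertical range contains $y_2'$; its horizontal range then contains $y_1'$, so there exists $(y_1',z_2)\in Q_{\iin{t}\jj}\cap E$ with $|z_2-y_2'|\leq\delta^{-1}t\ualpha^K$.

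The main obstacle is this last step: one must rule out contributions to $V_u(E)\cap[x_2-t,x_2+t]$ from other level-$n(\iii,t)$ cylinders $E_\mathbf{k}$, $\mathbf{k}\neq\iin{t}$, whose horizontal range also contains $u$. Such extras $(u,y_2')\in E_\mathbf{k}$ are forced into the annulus $B(x,t\sqrt{2})\setminus B(x,t)$ by the condition $E_\mathbf{k}\cap B(x,t)=\emptyset$, so a suitable placement of $u$ (for instance by passing to a deeper sub-cylinder of $Q_{\iin{t}\jj_0}$ to avoid overhangs from neighboring cylinders, exploiting the SSC) together with a quantitative annulus estimate should confine these extras to within $\delta^{-1}t\ualpha^K$ of the vertical range of $Q_{\iin{t}}$, completing the approximation.
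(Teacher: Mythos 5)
Your construction is the same as the paper's: you invoke Lemma \ref{lem:endings} to isolate the single ending line $\{w\}\times\R$, observe that every rectangle $Q_{\iin{t}\jj}$, $|\jj|=K$, meeting the left (resp.\ right) half of $B(x,t)$ must horizontally cover all of $[x_1-t,w]$ (resp.\ $[w,x_1+t]$), bound the heights of these rectangles by $\delta^{-1}t\ualpha^K$ via Lemma \ref{lem:littlethings}\eqref{ratio}, and use \ref{H:projection} to guarantee that the slice at $u$ meets each of them. Up to your last paragraph this is correct and is exactly the paper's argument, which simply fixes any $u\in(w-t,w)$ and $v\in(w,w+t)$.

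The one real gap is the issue you flag yourself and leave open: contributions to $V_u(E)$ from level-$n(\iii,t)$ cylinders $E_{\kkk}$ with $\kkk\ne\iin{t}$. Neither of your proposed remedies works as stated. Passing to a deeper sub-cylinder of $Q_{\iin{t}\jj_0}$ only moves $u$ within a window of width less than $2t$, while a foreign cylinder (being much wider than it is tall) may well project horizontally onto an interval containing that entire window, so no admissible $u$ escapes it; and confining the extras ``to within $\delta^{-1}t\ualpha^K$ of the vertical range of $Q_{\iin{t}}$'' would not suffice even if achieved, since what is needed is closeness to $E$ itself. The fix is elementary. If $(u,y_2)\in E_{\kkk}$ with $E_{\kkk}\cap B(x,t)=\emptyset$ while $(y_1,y_2)\in B(x,t)$ with $y_1\le w$, then comparing $(u-x_1)^2+(y_2-x_2)^2>t^2$ with $(y_1-x_1)^2+(y_2-x_2)^2\le t^2$ gives $|y_1-x_1|<|u-x_1|$. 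Consequently, if you choose $u<w$ within $\tfrac12\delta^{-1}t\ualpha^K$ of $\min\{w,x_1\}$, then in the case $w\le x_1$ one gets $u<y_1\le w$, and in the case $w>x_1$ one gets $|y_1-x_1|<|x_1-u|$; either way $|y_1-u|\le\delta^{-1}t\ualpha^K$, so the ``extra'' point lies within $\delta^{-1}t\ualpha^K$ of the genuine carpet point $(u,y_2)$. That this nearest carpet point sits just outside $B(x,t)$ is the same boundary effect already present in your forward direction, which the paper absorbs later via the double-radius device in the proof of Theorem \ref{thm:BK_general}. With this choice of $u$ (and symmetrically of $v$) your argument closes; for what it is worth, the paper's own write-up passes over this point without comment.
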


\begin{proof}
  Fix $K\in\N$, let $t_K>0$ be as in Lemma \ref{lem:endings}, and choose $0<t<t_K$. Let $\iii \in \Sigma$ be such that $x=\pi(\iii)$. If $|\jj|=K$, then, by Lemma \ref{lem:littlethings}\eqref{ratio}, the height of $Q_{\iin{t}\jj}$ is
  \[
    \alpha_2(\iin{t}\jj) = \alpha_2(\iin{t})\alpha_2(\jj) \leq \delta^{-1}t\ualpha^K.
  \]
  By the condition \ref{H:projection}, we have $d_H( E_{\iin{t}\jj} , Q_{\iin{t}\jj} ) \leq \delta^{-1}t\ualpha^K$. Furthermore, by Lemma \ref{lem:endings}, there is a point $w\in\R$ such that all the horizontal endings of rectangles $Q_{\iin{t}\jj}$ intersecting $B(x,t)$ are contained in the line $\{ w \} \times \R$. Define
  \begin{align*}
    \II &= \{\proj_{2} Q_{\iin{t}\jj} : |\jj| = K \text{ and } B(x,t)\cap Q_{\iin{t}\jj} \cap (-\infty , w) \times \R  \neq \emptyset \}, \\
    \JJ &= \{\proj_{2} Q_{\iin{t}\jj} : |\jj| = K \text{ and } B(x,t)\cap Q_{\iin{t}\jj} \cap (w,\infty) \times \R \neq \emptyset \}.
  \end{align*}
  This immediately means that
  \begin{equation*}
    E \cap B(x,t) \subset ((-\infty,w] \times \II \cup [w,\infty) \times \JJ) \cap B(x,t) \subset E(\delta^{-1}t\ualpha^K) \cap B(x,t),
  \end{equation*}
  where $E(\eps)$ is the $\eps$-neighborhood of $E$. Now fix $u$ and $v$ so that $w-t<u<w<v<w+t$ and consider the vertical slices $V_{u}(E)$ and $V_{v}(E)$. Recall that the vertical lines containing the horizontal endings of the rectangles $Q_{\iin{t}\jj}$ are at least $3t$ apart. Therefore, in the above inequalities $E$ can be replaced by the set
  \[
    (-\infty,w] \times V_{u}(E) \cup [w,\infty) \times V_{v}(E).
  \]
  Thus
  \[
    d_H^{B(x,t)}( (-\infty,w] \times V_{u}(E) \cup [w,\infty) \times V_{v}(E), E) \leq t \delta^{-1}\ualpha^K.
  \]
  This proves the claim in the case rectangles $Q_{\iin{t}\jj}$ have horizontal endings intersecting $B(x,t)$. If there are no such endings, then we may choose $u=v=w=\proj_1(x)$, where $\proj_1$ is the orthogonal projection onto the horizontal axis.
\end{proof}

We are now ready to prove Theorem \ref{thm:BK_general}.

\begin{proof}[Proof of Theorem \ref{thm:BK_general}]
  Let $W$ be a weak tangent of $E$. This means that there exist a sequence $(x_i)$ of points in $E$ and a sequence $(t_i)$ of positive reals converging to zero such that
  \begin{equation}
  \label{eq:covergence-W}
  \begin{split}
   &\lim_{i \to \infty} \sup\{ \dist(x,W) : x \in (E-x_i)/t_i \cap B(0,R) \} = 0\text{ and }\\
   &\lim_{i \to \infty} \sup\{ \dist(y,(E-x_i)/t_i) : y \in W \cap B(0,R) \} = 0
  \end{split}
  \end{equation}
  for all $R>0$. Fix $R>0$, let $K \in \N$, and choose $i_0 \in \N$ such that $0<t_i<t_K/2R$ for all $i \ge i_0$, where $t_K>0$ is as in Lemma \ref{lem:epspatterns}. Now for each $i \ge i_0$, by Lemma \ref{lem:epspatterns}, there exist $u_i,w_i,v_i \in \R$ so that
  \begin{equation*}
    d_H^{B(x_i,2Rt_i)}\bigl((-\infty,w_i]\times V_{u_i}(E) \cup [w_i,\infty)\times V_{v_i}(E),E\bigr) \le 2Rt_i\delta^{-1}\ualpha^K.
  \end{equation*}
  Therefore,
  \begin{equation*}
    d_H^{B(0,2R)}\bigl(\bigl(((-\infty,w_i]\times V_{u_i}(E) \cup [w_i,\infty)\times V_{v_i}(E))-x_i\bigr)/t_i,(E-x_i)/t_i\bigr) \le 2R\delta^{-1}\ualpha^K
  \end{equation*}
  for all $i \ge i_0$. The use of double radius here ensures that we do not need to worry about the convergence on the boundary of $B(0,R)$. As $i$ increases we can let $K \to \infty$, and so the above Hausdorff distance converges to zero. Thus by \eqref{eq:covergence-W},
  \begin{equation*}
    \bigl(((-\infty,w_i]\times V_{u_i}(E) \cup [w_i,\infty)\times V_{v_i}(E))-x_i\bigr)/t_i
  \end{equation*}
  converges to $W$. Due to the convergence, there exists $w \in \R$ such that
  \begin{equation*}
    W = (-\infty,w]\times\CL \cup [w,\infty)\times\CR,
  \end{equation*}
  where $\CL$ and $\CR$ are weak vertical slice tangents of $E$. By Lemma \ref{lem:verticalsilices}, vertical slices of $E$ are porous and uniformly perfect. Since these properties are preserved in the limit we have finished the proof.
\end{proof}

The following example illustrates that $\CL$ and $\CR$ can be disjoint.

\begin{example} \label{ex:CLCR}
  Let $g \colon \R^2 \to \R^2$, $g(x_1,x_2)=(0.5x_1,0.2x_2)$, and then set $f_1=g$, $f_2=g+(0.5,0.25)$, $f_3=g+(0,0.55)$, and $f_4=g+(0.5,0.8)$. The invariant set $E$ of the iterated function system $\{f_i\}_{i=1}^4$ is depicted in Figure \ref{fig:exampleCLCR}. Notice that the vertical center line of the unit cube contains left and right endings of the construction rectangles. To obtain a weak tangent so that $\CL$ and $\CR$ are disjoint and nonempty, we just choose the defining sequences $(x_i)$ and $(t_i)$ so that for each $i$ the vertical center line of an appropriate construction rectangle is in the middle of the ball $B(x_i,t_i)$. This ensures that the unit ball of the weak tangent also has a separating vertical line in the middle.
\end{example}

  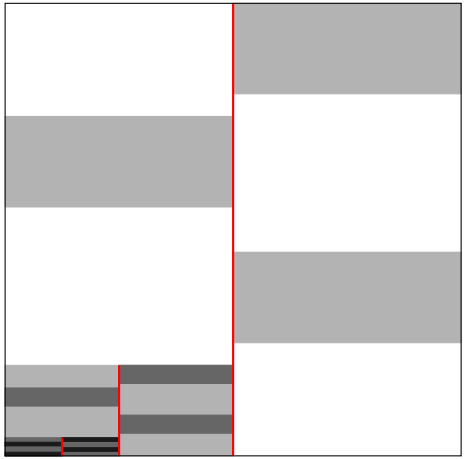
\begin{figure}[t]
    \begin{centering}
      \begin{tikzpicture}[scale=0.3]
        \filldraw[black!30!white] (0,0) rectangle (10,4); 
        \filldraw[black!30!white] (10,5) rectangle (20,9);
        \filldraw[black!30!white] (0,11) rectangle (10,15);
        \filldraw[black!30!white] (10,16) rectangle (20,20);

        \filldraw[black!60!white] (0,0) rectangle (0.5*10,0.2*4); 
        \filldraw[black!60!white] (0.5*10,0.2*5) rectangle (0.5*20,0.2*9);
        \filldraw[black!60!white] (0.5*0,0.2*11) rectangle (0.5*10,0.2*15);
        \filldraw[black!60!white] (0.5*10,0.2*16) rectangle (0.5*20,0.2*20);

        \filldraw[black!90!white] (0,0) rectangle (0.5*0.5*10,0.2*0.2*4); 
        \filldraw[black!90!white] (0.5*0.5*10,0.2*0.2*5) rectangle (0.5*0.5*20,0.2*0.2*9);
        \filldraw[black!90!white] (0.5*0.5*0,0.2*0.2*11) rectangle (0.5*0.5*10,0.2*0.2*15);
        \filldraw[black!90!white] (0.5*0.5*10,0.2*0.2*16) rectangle (0.5*0.5*20,0.2*0.2*20);

        \draw[red,thick] (10,0) -- (10,20); 
        \draw[red,thick] (5,0) -- (5,4); 
        \draw[red,thick] (2.5,0) -- (2.5,0.2*4); 

        \draw (0,0) rectangle (20,20); 
      \end{tikzpicture}
    \end{centering}
    \caption{The picture depicts some of the construction rectangles of the set $E$ in Example \ref{ex:CLCR}. The darkness of the color indicates the level of the construction rectangle and the red lines represent the vertical center lines.}
    \label{fig:exampleCLCR}
  \end{figure}

Relying on Theorem \ref{thm:BK_general}, we can now prove Theorem \ref{thm:horizontal_carpet}. The proof follows from Theorem \ref{thm:quasilip} and Lemma \ref{lem:manyfiberedcomponents} by showing that quasisymmetric weak tangent mappings act between unions of fibered spaces.

\begin{proof}[Proof of Theorem \ref{thm:horizontal_carpet}]
  Let $f \colon E \to F$ be an $\eta$-quasisymmetric mapping. By Theorem \ref{thm:quasilip}, it suffices to show that any $(1,\eta)$-quasisymmetric weak tangent mapping $\hat f$ of $f$ is $L$-bi-Lipschitz with $L$ depending only on $\eta$. By Theorem \ref{thm:BK_general}, the domain and codomain of $\hat f$ are $(-\infty,w] \times \CL \cup [w,\infty) \times \CR$ and $(-\infty,w'] \times \CL' \cup [w',\infty) \times \CR'$, respectively, where $\CL$, $\CR$, $\CL'$, and $\CR'$ are uniformly perfect porous sets and $w,w' \in \R$. Define
  \begin{align*}
    \TL &= \CL \setminus \CR, \\
    \TR &= \CR \setminus \CL, \\
    T &= \CL \cap \CR,
  \end{align*}
  and likewise $\TL'$, $\TR'$, and $T'$ in the codomain side. For topological reasons, it is clear that $\hat f(\R \times T) = \R \times T'$ and $\hat f((-\infty,w] \times \TL \cup [w,\infty) \times \TR) = (-\infty,w'] \times \TL' \cup [w',\infty) \times \TR'$. We will show that $(-\infty,w] \times \TL \cup [w,\infty) \times \TR$ and $(-\infty,w'] \times \TL' \cup [w',\infty) \times \TR'$ are fibered spaces and that $\R \times T$ and $\R \times T'$ both contain a suitable fibered space. It suffices to show the claim for $(-\infty,w] \times \TL \cup [w,\infty) \times \TR$ and $\R \times T$ since the same proof applies in the codomain side.
  
  Let us first show that $(-\infty,w] \times \TL \cup [w,\infty) \times \TR$ is a fibered space. Since $\TL$ and $\TR$ are disjoint we may define
  \begin{equation*}
    F_y =
    \begin{cases}
      (-\infty,w] \times \{ y \}, &\text{if } y \in \TL, \\
      [w,\infty) \times \{ y \}, &\text{if } y \in \TR.
    \end{cases}
  \end{equation*}
  Now clearly
  \begin{equation*}
    \bigcup_{y \in \TL \cup \TR} F_y = (-\infty,w] \times \TL \cup [w,\infty) \times \TR
  \end{equation*}
  and the fibers $F_y$ are unbounded geodesic metric spaces. Thus the condition \ref{F1} is satisfied. Since
  \begin{equation*}
    \dist(F_y,F_z) = |y-z| > 0
  \end{equation*}
  for all $y,z \in \TL \cup \TR$ with $y \ne z$, also the condition \ref{F2} is satisfied. Fibers $F_y$ and $F_z$ are non-parallel only if $y \in \TL$ and $z \in \TR$ or vice versa. This implies that $d_H(F_y,F_z) = \infty$ and the condition \ref{F3} is satisfied. To check \ref{F4}, it suffices to show that there are no isolated points in $\TL$ (nor in $\TR$). Recall that $\CL$ and $\CR$ are closed sets with no isolated points. Hence for each $y \in \TL$ there exists a sequence $(y_i)$ in $\CL$ which converges to $y$. Suppose to the contrary that $y$ is an isolated point of $\TL$. By the definition of $\TL$ this means that $y_i \in \CR$ for all large enough $i$. Since $\CR$ is closed we conclude that $y \in \CR$ and hence $y \notin \TL$ which is a contradiction.
  
  Let us then show that $\R \times S$ is a fibered space, where
  \begin{equation*}
    S = T \setminus \overline{\TL \cup \TR}.
  \end{equation*}
  Let $S'$ be the corresponding set in the codomain side. Observe that $S \cup \TL \cup \TR$ is dense in $T \cup \TL \cup \TR = \CL \cup \CR$. As above, the conditions \ref{F1}--\ref{F3} follow immediately. To check \ref{F4}, it suffices to show that there are no isolated points in $S$. Suppose to the contrary that $y \in S$ is an isolated point of $S$. Recalling that $\CL \cup \CR$ has no isolated points, there exists a sequence $(y_i)$ in $T \cup \TL \cup \TR$ which converges to $y$. Since $y$ is an isolated point and, by the definition of $S$, there does not exist a sequence in $\TL \cup \TR$ converging to $y$, we have $y_i \in T \setminus S$ for all large enough $i$. We may assume that $y_i \in T \setminus S$ for all $i$. For each $y_i$, by the definition of $S$, there exists a sequence $(z_j^i)_j$ in $\TL \cup \TR$ converging to $y_i$. Therefore, by choosing suitable points from these sequences, we may construct a sequence $(z_{j_i}^i)_i$ in $\TL \cup \TR$ which converges to $y$. Thus $y \notin S$ which is a contradiction.
  
  To finish the proof, we use the denseness and apply Lemma \ref{lem:manyfiberedcomponents} in the union of these fibered spaces. Since $\hat f(\R \times T) = \R \times T'$ it follows from the continuity of $\hat f$ that $\hat f(\R \times S) = \R \times S'$. Therefore $\hat f$ maps the fibers of $\R \times S$ homeomorphically onto the fibers of $\R \times S'$ and the fibers of $(-\infty,w] \times \TL \cup [w,\infty) \times \TR$ homeomorphically onto the fibers of $(-\infty,w'] \times \TL' \cup [w',\infty) \times \TR'$. Since $\R \times S \cup (-\infty,w] \times \TL \cup [w,\infty) \times \TR$ is dense in $(-\infty,w] \times \CL \cup [w,\infty) \times \CR$ and $\hat f$ is $(1,\eta)$-quasisymmetric, we have, by the continuity of $\hat f$, that $\hat f$ restricted to $\R \times S \cup (-\infty,w] \times \TL \cup [w,\infty) \times \TR$ is $(2,\eta)$-quasisymmetric. By Lemma \ref{lem:manyfiberedcomponents}, this restriction is $L$-bi-Lipschitz, where $L$ depends only on $\eta$. Since $\hat f$ is $L$-bi-Lipschitz on a dense set it is $L$-bi-Lipschitz on the whole set.
\end{proof}

We will then turn to the proof of Theorem \ref{thm:conformal_dimensio}. Let $Q=[0,1]^2$. Recall that a set $M \subset Q$ is a \emph{Furstenberg miniset} of $E \subset Q$ if $M \subset (\lambda E+z) \cap Q = \{ \lambda x+z : x \in E \} \cap Q$ for some $\lambda \ge 1$ and $z \in \R^2$. The number $\lambda$ is called the \emph{scaling coefficient} of the miniset $M$. A set $M \subset Q$ is a \emph{Furstenberg microset} of a compact set $E \subset Q$ if there exists a sequence $(M_n)$ of Furstenberg minisets of $E$ such that $d_H(M_n,M) \to 0$ as $n \to \infty$. The sequence $(\lambda_n)_{n \in \N}$, where each $\lambda_n$ is a scaling coefficient of the miniset $M_n$, is called the \emph{scaling sequence} of the microset $M$. Note that a Furstenberg miniset is clearly a Furstenberg microset. Furthermore, if the scaling sequence of a microset $M$ is unbounded, then $M$ is a subset of a weak tangent of $E$.

\begin{proposition} \label{prop:microAssouad}
  If $E \subset Q$ is compact, then there exists a Furstenberg microset $M$ of $E$ having unbounded scaling sequence such that $\dimh M \geq \dima E$.
\end{proposition}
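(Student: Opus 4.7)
I plan to realize $\dima E$ as the Hausdorff dimension of a microset by combining a blow-up/rescaling argument with a Frostman measure. Let $s=\dima E$, and fix arbitrary $s' < s < s''$. By the definition of the Assouad dimension, there is a constant $C_0$ with $N(E\cap B(x,R), r) \le C_0(R/r)^{s''}$ for all $x \in E$ and $0<r<R$, while for any constant $C$ there exist scales violating the corresponding bound at exponent $s'$. Choose, for each $n \in \N$, a point $x_n\in E$ and scales $0<r_n<R_n$ with $R_n\to 0$ together with an $r_n$-separated subset $S_n\subset E\cap B(x_n,R_n)$ satisfying $|S_n|\ge n(R_n/r_n)^{s'}$. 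Combining with the Assouad bound at $s''$ yields the automatic lower bound $(R_n/r_n)^{s''-s'}\ge n/C_0$. A refinement step (passing repeatedly to a dyadic sub-ball of the bad ball, with pigeonhole controlled by the Assouad bound at $s''$) permits the additional arrangement $(R_n/r_n)^{s''-s'}\le Kn$ for some constant $K=K(s',s'')$.

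Define the affine map $T_n(y)=(y-x_n)/R_n + q_0$ where $q_0 \in Q$ is a fixed interior point with $B(q_0,1/2)\subset Q$; then $T_n$ scales distances by $\lambda_n := 1/R_n \to \infty$. Set $M_n := T_n(E)\cap Q = (\lambda_n E + (q_0-x_n/R_n))\cap Q$, a Furstenberg miniset of $E$ with scaling coefficient $\lambda_n$. By compactness of the Hausdorff metric on closed subsets of $Q$, a subsequence of $(M_n)$ converges to a compact $M\subset Q$, which is then a Furstenberg microset of $E$ with unbounded scaling sequence $(\lambda_n)$. Define probability measures $\mu_n := |S_n|^{-1}\sum_{z\in S_n}\delta_{T_n(z)}$ supported on $T_n(S_n)\subset M_n$, and extract via Prokhorov a further subsequence with $\mu_n\rightharpoonup \mu$ weakly; then $\mathrm{supp}\,\mu\subseteq M$.

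For the Frostman estimate, fix $y\in\R^2$ and $\rho\in[r_n/R_n,1]$. The preimage $T_n^{-1}(B(y,\rho))$ is a ball of radius $\rho R_n$, so $S_n\cap T_n^{-1}(B(y,\rho))$ is an $r_n$-separated subset of $E$ contained in this ball, and the Assouad bound at $s''$ bounds its cardinality by $C_0(\rho R_n/r_n)^{s''}$. Thus
\[
 \mu_n(B(y,\rho)) \le \frac{C_0(\rho R_n/r_n)^{s''}}{|S_n|} \le \frac{C_0}{n}\Bigl(\frac{R_n}{r_n}\Bigr)^{s''-s'}\rho^{s''} \le C_0K\rho^{s''} \le C_0K\rho^{s'},
\]
using $\rho\le 1$ and $s''\ge s'$. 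Passing to the weak limit (and using $r_n/R_n\to 0$), $\mu(B(y,\rho))\le C_0K\rho^{s'}$ for all $y\in\R^2$ and all sufficiently small $\rho>0$. The mass distribution principle yields $\dimh(\mathrm{supp}\,\mu)\ge s'$, whence $\dimh M\ge s'$. Finally, running the construction with $s' = s'_k \uparrow s=\dima E$ and extracting a diagonal Hausdorff-convergent subsequence of the produced microsets (or, equivalently, performing a diagonal Frostman construction that yields a single limit measure satisfying the Frostman condition at every exponent $s'_k$) produces a single microset $M$ of $E$ with $\dimh M\ge \dima E$.

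The main technical obstacle is the selection of blow-up scales, specifically the refinement step ensuring that $(R_n/r_n)^{s''-s'}$ is comparable to $n$ rather than growing too fast. This pigeonhole argument, leveraging the Assouad bound at an exponent strictly above $\dima E$, is what keeps the Frostman constant $C_0K$ uniform in $n$ and thereby survives the passage to the limit; the rest of the proof is a compactness-plus-Frostman package once the scales are correctly chosen.
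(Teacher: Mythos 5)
Your overall architecture (blow up a bad ball, put normalized counting measure on a separated set, pass to a weak limit, apply the mass distribution principle) is sensible, but the proof breaks at the refinement step, and the break is not repairable in the form you state it. You need simultaneously $|S_n|\ge n(R_n/r_n)^{s'}$ and $(R_n/r_n)^{s''-s'}\le Kn$; together these give
\begin{equation*}
  |S_n| \;\ge\; n\Bigl(\frac{R_n}{r_n}\Bigr)^{s'} \;\ge\; \frac{1}{K}\Bigl(\frac{R_n}{r_n}\Bigr)^{s''-s'}\Bigl(\frac{R_n}{r_n}\Bigr)^{s'} \;=\; \frac{1}{K}\Bigl(\frac{R_n}{r_n}\Bigr)^{s''}.
\end{equation*}
Since your own first observation forces $R_n/r_n\to\infty$, this says that $E$ contains $r_n$-separated subsets of balls $B(x_n,R_n)$ of cardinality at least $K^{-1}(R_n/r_n)^{s''}$ along a sequence of unbounded ratios, which implies $\dima E\ge s''$ and contradicts the choice $s''>\dima E$. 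So no pigeonhole or dyadic descent can produce the scales you need: the two requirements are mutually exclusive. The underlying geometric issue is that a separated set which is large only at the two extreme scales $R_n$ and $r_n$ may be arbitrarily concentrated at intermediate scales, so the normalized counting measure $\mu_n$ need not satisfy any Frostman bound at exponent $s'$ on the whole range $[r_n/R_n,1]$; the only uniform control available is the ambient bound at exponent $s''$, and, as your own display shows, converting it into an $s'$-bound costs exactly the factor $n^{-1}(R_n/r_n)^{s''-s'}$ that cannot be kept bounded.

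The paper circumvents this in two ways that your argument is missing. First, it does not work with the original set and a single pair of scales: for each $n$ it takes a Furstenberg microset $M_n$ maximizing the number of $n$-level dyadic cubes met, and uses the nontrivial input $\dima E\le\lim_{n\to\infty}\log N_n(E)/\log 2^n$ from \cite[Proposition 3.13]{KaenmakiRossi2015}, which holds only because the count is maximized over all microsets (for $E$ itself this limit is merely the upper Minkowski dimension). Second, the Frostman-type estimate \eqref{eq:F-claim} is obtained by a stopping-time (telescoping) argument and is asserted only over a window of $n$ consecutive dyadic levels below a well-chosen cube $D_{n,s}$, after renormalizing the measure on that cube; it is the window length, not the total scale ratio down to the separation scale, that is sent to infinity. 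To salvage your approach you would have to reproduce both steps, at which point you have essentially the paper's proof. (Your concluding diagonal argument is also delicate, since Hausdorff dimension is not lower semicontinuous under Hausdorff convergence; the paper avoids this by extracting a single weak limit measure satisfying the Frostman condition at the limiting exponent $t$.)
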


\begin{proof}
  Bishop and Peres \cite[Lemma 2.4.4]{BishopPeres2016} have shown that for any compact set $K \subset [0,1]$ there exists a Furstenberg microset $M$ of $K$ such that $\dimh M \ge \udimm K$. Although our proof here is a modification of their argument, we give full details for the convenience of the reader.

  Let $\tilde N_n(A)$ be the number of $n$ level dyadic cubes that intersect $A$ and define
  \[
    N_n(E) = \max\{\tilde N_n(A): A \text{ is a Furstenberg microset of } E \}.
  \]
  By \cite[Proposition 3.13]{KaenmakiRossi2015}, we have $\dima E \leq \lim_{n \to \infty} \log N_n(E)/\log 2^n$. Denote this limit by $t$. Let $M_n$ be a microset of $E$ such that $N_n(E) = \tilde N_n(M_n)$ for all $n \in \N$ and choose a probability measure $\nu_n$ on $M_n$ such that $\nu_n(D) = N_n(E)^{-1}$ for all $n$-level dyadic cubes $D$ that intersect $M_n$. For each dyadic cube $D$, let $S^D$ be the homotethy that maps $D$ to $Q$. Note that $S^D(x) = \lambda(x-a)$ for some $\lambda > 0$ and $a \in \R^2$. We set $A^D = S^D(A \cap D)$ for all compact sets $A \subset Q$ and $\nu^D = \nu(D)^{-1} S^D\nu$ for all probability measures $\nu$. Here $S^D\nu$ is the pushforward measure of $\nu$ under $S^D$.
  
  Fix $n \in \N$ and $0<s<t$. We will show that there exist $k(n,s) \in \N$ and a dyadic cube $D_{n,s}$ of level $l_n \ge n$ so that
  \begin{equation} \label{eq:F-claim}
    \frac{\nu_{k(n,s)}(D')}{\nu_{k(n,s)}(D_{n,s})} \le \frac{\diam(D')^s}{\diam(D_{n,s})^s}
  \end{equation}
  for all $l'$-level dyadic cubes $D' \subset D_{n,s}$ for all $l_n < l' \le l_n+n$. Here $\diam(A) = \sup\{ |x-y| : x,y \in A \}$ is the \emph{diameter} of the set $A$. The claim means that when we look at most $n$ levels further, the measure appears to be quite evenly spread in $Q_{n,s}$.

  To show \eqref{eq:F-claim}, we argue by contradiction. So, if the claim does not hold, then for every $k,n \in \N$ and an $n$-level dyadic cube $D_n$ of positive measure there exists a dyadic cube $D_{l_n^1}\subset D_n$ of level $n < l_n^1 \le 2n$ so that
  \begin{equation*}
    \frac{\nu_k(D_{l_n^1})}{\nu_k(D_n)} > \frac{\diam(D_{l_n^1})^s}{\diam(D_n)^s}.
  \end{equation*}
  We choose $D_n$ such that $\nu_k(D_n) \ge 2^{-2n}$. This can be done since $\nu_k$ is a probability measure and there are $2^{2n}$ dyadic cubes of level $n$. Since the claim fails also for $D_{l_n^1}$ we find a dyadic cube $D_{l_n^2}$ of level $l_n^1 < l_n^2 \le l_n^1+n$ so that
  \begin{equation*}
    \frac{\nu_k(D_{l_n^2})}{\nu_k(D_{l_n^1})} > \frac{\diam(D_{l_n^2})^s}{\diam(D_{l_n^1})^s}.
  \end{equation*}
  Continuing in this manner let $m$ be the largest integer for which $\diam(D_{l_n^m}) > 2^{-k}\sqrt{2}$. Thus $k-n \le l_n^m < k$. By telescoping, we get
  \begin{equation} \label{eq:F-arvio1}
    \nu_k(D_{l_n^m}) > \frac{\diam(D_{l_n^m})^s}{\diam(D_n)^s} \nu_k(D_n) = 2^{-l_n^ms} 2^{ns} \nu_k(D_n) \ge 2^{-l_n^ms} 2^{ns} 2^{-2n}.
  \end{equation}
  Let $s < s' < t$ and observe that there exists $k_0 \in \N$ such that $N_k(E) > 2^{ks'}$ for all $k \ge k_0$. Since $D_{l_n^m}$ contains at most $2^{2n}$ dyadic cubes of level $k$ we have
  \begin{equation} \label{eq:F-arvio2}
    \nu_k(D_{l_n^m}) \le \frac{2^{2n}}{N_k(E)} \le 2^{2n}2^{-ks'}
  \end{equation}
  for all $k \ge k_0$. Putting \eqref{eq:F-arvio1} and \eqref{eq:F-arvio2} together gives
  \begin{equation*}
    2^{-ks} 2^{ns} 2^{-2n} < 2^{2n}2^{-ks'}
  \end{equation*}
  which is clearly a contradiction if $k$ is chosen to be large enough. Therefore \eqref{eq:F-claim} holds.

  Let us now use \eqref{eq:F-claim} to prove the proposition. For every $n \in \N$ we choose $0<s_n<t$ such that $s_n \to t$ as $n \to \infty$. We consider the microsets $K_n = (M_{k(n,s_n)})^{D_{n,s_n}}$ and probability measures $\mu_n = (\nu_{k(n,s_n)})^{D_{n,s_n}}$. Let $K$ and $\mu$ be such that $K_n \to K$ in the Hausdorff distance and $\mu_n \to \mu$ weakly along some subsequence (which we keep denoting as the original sequence). Observe that $\spt\mu \subset K$ and that $K$, as a limit of microsets, is a microset of $E$. Furthermore, if $U(x,r)$ is an open ball centered at $x \in K$ with radius $r>0$, then
  \begin{equation*}
    \mu(U(x,r)) \le \liminf_{n \to \infty} \mu_n(U(x,r)).
  \end{equation*}
  Notice also that there is a constant $p$, depending only on the dimension of the ambient space, such that any ball of radius $r$ can be covered by $p$ many dyadic cubes $D_i$ with $r/2 < \diam(D_i) \le r$. By \eqref{eq:F-claim}, we have $\mu_n(D_i) = \nu_{k(n,s_n)}(D_i)^{-1}S^{D_{n,s_n}}\nu_{k(n,s_n)}(D_i) \le r^{s_n}$ whenever $2^{-n} < r < 1$. Thus we have
  \begin{equation*}
    \mu(U(x,r)) \le \liminf_{n \to \infty} \mu_n(U(x,r)) \le \liminf_{n \to \infty} \sum_{i=1}^p \mu_n(D_i) \le \liminf_{n \to \infty} \sum_{i=1}^p r^{s_n} = pr^t
  \end{equation*}
  for all $x \in K$ and $0<r<1$. By the definition of the Hausdorff measure, this implies that $\dimh(K) \ge t$. The proof is finished.
\end{proof}

The following proposition is a rather immediate corollary of Proposition \ref{prop:microAssouad}.

\begin{proposition} \label{cor:weakdimension}
  If $E\subset Q$ is compact, then there exists a weak tangent $W$ of $E$ such that
  \[
    \dimh W \cap Q = \dima W \cap Q = \dima E.
  \]
\end{proposition}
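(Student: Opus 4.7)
The plan is to derive this statement as a short consequence of Proposition \ref{prop:microAssouad}, together with two standard facts: (i) a Furstenberg microset with unbounded scaling sequence is contained in a weak tangent of $E$, as observed in the paragraph immediately preceding Proposition \ref{prop:microAssouad}, and (ii) the Assouad dimension does not increase under weak tangents, which follows from the scale-invariant covering definition of $\dima$ combined with the persistence of covering inequalities under Hausdorff limits on bounded regions.

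First, I would apply Proposition \ref{prop:microAssouad} to produce a Furstenberg microset $M \subset Q$ of $E$ with unbounded scaling sequence satisfying $\dimh M \geq \dima E$. Writing $M$ as the Hausdorff limit of minisets $M_n \subset (\lambda_n E + z_n) \cap Q$ with $\lambda_n \to \infty$, and choosing for each $n$ a basepoint $p_n \in E$ with $\lambda_n p_n + z_n \in Q$ (possible since $M_n \ne \emptyset$), the pointed rescalings $(E, |\cdot|/\lambda_n^{-1}, p_n)$ converge along a subsequence to a weak tangent $W$. A routine Hausdorff-limit computation, absorbing a bounded translation via the isometric freedom in the definition of weak tangents, then yields $M \subset W \cap Q$ for this weak tangent $W$.

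Next, I would verify $\dima W \leq \dima E$ directly: if $t > \dima E$, then the uniform covering estimate holds for $E$, is preserved by each rescaling $\lambda_n(E - p_n)$, and descends to the Hausdorff limit $W$ by a straightforward compactness argument on the approximate coverings of each compact ball. Combining this with the monotonicity of $\dimh$ and $\dima$ under inclusion and the general inequality $\dimh \leq \dima$ gives the chain
\begin{equation*}
  \dima E \leq \dimh M \leq \dimh(W \cap Q) \leq \dima(W \cap Q) \leq \dima W \leq \dima E,
\end{equation*}
which forces every inequality to be an equality, yielding the proposition.

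The only genuinely delicate step is the bookkeeping in the first stage: one must verify that the choice of basepoints really places the microset $M$ inside $W \cap Q$ rather than inside some translate of $Q$, so that the Hausdorff dimension lower bound on $M$ transfers to $W \cap Q$ itself. This is essentially the content of the ``microset-in-weak-tangent'' comment made explicit; once that is settled, the rest of the argument is purely formal.
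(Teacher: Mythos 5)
Your proposal is correct and follows essentially the same route as the paper: apply Proposition \ref{prop:microAssouad} to obtain a microset $M$ with unbounded scaling sequence and $\dimh M \ge \dima E$, place $M$ inside $W \cap Q$ for a weak tangent $W$, and close the chain of inequalities using the fact that weak tangents do not increase the Assouad dimension (which the paper imports from Mackay's Proposition 2.1 rather than reproving, and the paper likewise treats the microset-in-weak-tangent step as the remark preceding Proposition \ref{prop:microAssouad}). The extra details you supply for those two steps are sound but not a different argument.
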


\begin{proof}
  By Proposition \ref{prop:microAssouad}, there exists a Furstenberg microset $M$ of $E$ having unbounded scaling sequence such that $\dimh M \geq \dima E$. Hence, $M$ is a subset of $W\cap Q$, where $W$ is a weak tangent of $E$. By \cite[Proposition 2.1]{Mackay2011}, we have $\dima W \cap Q \le \dima E$. Therefore,
  \[
    \dimh M \leq \dimh W \cap Q \leq \dima W \cap Q \leq \dima E \leq \dimh M
  \]
  which is what we wanted to show.
\end{proof}

We remark that in Proposition \ref{cor:weakdimension} it is essential to use weak tangents -- there does not necessarily exist such tangent sets; see \cite[Example 2.20]{LeDonneRajala2015}. Together with Remark \ref{rem:weak-tangent}, this observation further emphasizes the use of weak tangents in our analysis. We are now ready to prove Theorem \ref{thm:conformal_dimensio}.

\begin{proof}[Proof of Theorem \ref{thm:conformal_dimensio}]
  Let $E$ be a horizontal self-affine carpet. By Proposition \ref{cor:weakdimension}, there exists a weak tangent $W$ of $E$ such that
  \begin{equation*}
    \dimh W \cap Q = \dima E.
  \end{equation*}
  It follows from Theorem \ref{thm:BK_general} and \cite[Theorem 4.1.11]{MackayTyson2010} that
  \begin{equation*}
    \dimh W \cap Q = \cdimh W \cap Q.
  \end{equation*}
  Furthermore, by recalling \cite[Proposition 2.1]{Mackay2011}, we see that
  \begin{equation*}
    \cdima W \cap Q \le \cdima E.
  \end{equation*}
  Putting these estimates together gives
  \begin{equation*}
    \dima E \leq \dimh W \cap Q  = \cdimh W \cap Q  \le \cdima W \cap Q \le \cdima E
  \end{equation*}
  finishing the proof.
\end{proof}

To conclude the article, we discuss about the role of the condition \ref{H:projection} in Theorems \ref{thm:horizontal_carpet} and \ref{thm:conformal_dimensio}.

\begin{remark} \label{rem:projection-interval}
  In Theorem \ref{thm:conformal_dimensio}, the condition \ref{H:projection} can be replaced by the following condition:
  \begin{itemize}
    \item[(H2')] The projection of $E$ onto the horizontal coordinate is a line segment.
  \end{itemize}
  It is clearly equivalent to (H2') to assume that every vertical line that intersects $X$, the convex hull of $E$, intersects $\fii_i(X)$ for some $i\in \{ 1,\ldots,N \}$. Theorem \ref{thm:conformal_dimensio} remains true since with (H2') we can modify the proofs of Lemmas \ref{lem:endings} and \ref{lem:epspatterns} to show that the weak tangents have the form $(-\infty,w] \times \CL \cup [w,\infty) \times \CR$, where $\CL$ and $\CR$ are closed porous sets, which is enough for the conformal Hausdorff minimality of the weak tangents. Observe that the proof of Theorem \ref{thm:conformal_dimensio} actually shows that any compact set $E$, whose weak tangents are either minimal for the conformal Hausdorff dimension or minimal for the conformal Assouad dimension, is minimal for the conformal Assouad dimension. Note that this is valid in any dimension.
\end{remark}

\begin{remark} \label{rem:projections}
  In this remark, we consider the following generalization of the condition \ref{H:projection}:
  \begin{itemize}
    \item[(H2'')] Every vertical line that intersects $X$, the convex hull of $E$, either does not intersect any of the sets $\fii_i(X)$ or intersects $\fii_\ii(X)$ for at least two distinct $\ii \in \{ 1,\ldots,N \}^2$.
  \end{itemize}
  Note that according to (H2''), the projection of $E$ onto the horizontal coordinate is a finite union of line segments. These line segments are the projections of the first level images of $X$. All theorems of this section are still true if we replace \ref{H:projection} by (H2''). The key observation is that even with this weaker condition, the form of the weak tangents remains the same. In Lemmas \ref{lem:littlethings}--\ref{lem:epspatterns}, the only difference is that we have different constants. To see this, let $G$ denote the collection of the vertical lines that intersect $X$ but not $E$. Note that the images $\fii_\ii(G)$ divide the rectangle $Q_\ii$ into finitely many rectangles. These rectangles will be then used in place of the rectangles $Q_\ii$ in Lemmas \ref{lem:endings} and \ref{lem:epspatterns}. Thus there are more, but still finitely many vertical endings to deal with. Since this has an effect only on constants, the results remain the same.
\end{remark}

  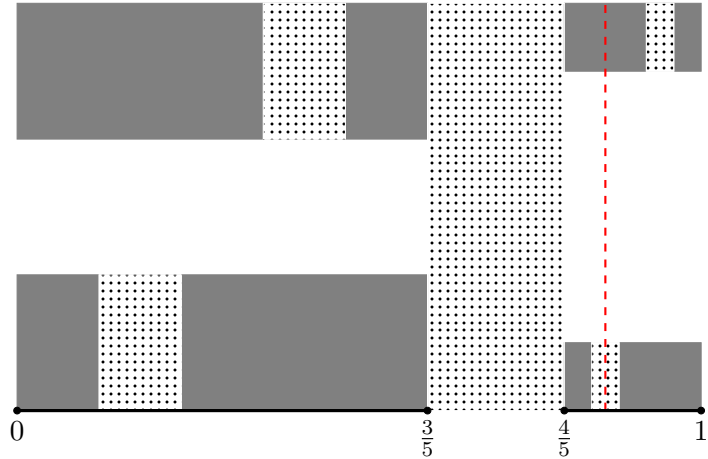
\begin{figure}[t]
    \begin{centering}
      \begin{tikzpicture}[scale=1.8]
      %
      \foreach \j in {(0,0),(0,2)} {
          \filldraw[black!50!white] \j rectangle ++ (0.6*5,1);
      }
      \foreach \j in {(4,0),(4,2.5)} {
          \filldraw[black!50!white] \j rectangle ++ (1,0.5);
      }
      \draw[white,pattern=dots] (3,0) rectangle ++ (1,3);
      \filldraw[white] (0.6*1,0) rectangle ++ (0.6*1,0.2*5);
      \draw[white,pattern=dots] (0.6*1,0) rectangle ++ (0.6*1,1);
      %
      %
      \filldraw[white] (0.6*3,2) rectangle ++ (0.6*1,1);
      \draw[white,pattern=dots] (0.6*3,2) rectangle ++ (0.6*1,1);
      \filldraw[white] (0.2*1,0) ++ (4,0) rectangle ++ (0.2*1,0.5);
      \draw[white,pattern=dots] (0.2*1,0) ++ (4,0) rectangle ++ (0.2*1,0.5);
      \filldraw[white] (0.2*3,0) ++ (4,2.5) rectangle ++ (0.2*1,0.5);
      \draw[white,pattern=dots] ((0.2*3,0) ++ (4,2.5) rectangle ++ (0.2*1,0.5);
      \draw[red,thick,dashed] (4.3,0) -- (4.3,3);
      \draw[black,very thick] (0,0) -- (3,0);
      \draw[black,very thick] (4,0) -- (5,0);
      \draw[fill](0,0) circle (.7pt) node [below] {$0$};
      \draw[fill](3,0) circle (.7pt) node [below] {$\frac{3}{5}$};
      \draw[fill](4,0) circle (.7pt) node [below] {$\frac{4}{5}$};
      \draw[fill](5,0) circle (.7pt) node [below] {$1$};
      \end{tikzpicture}
    \end{centering}
    \caption{An example of a self-affine carpet for which the projection onto the horizontal coordinate is a union of two disjoint line segments. The gray parts represent the first level construction rectangles and the dotted areas show the gaps inside them. The dashed line shows how a vertical line, which goes through a gap of one cylinder, will hit another construction rectangle of the same level. The thick black line segments in the bottom represent the projection of the self-affine carpet in question.}
    \label{fig:projectiongap}
  \end{figure}

\begin{example} \label{ex:gap}
  Mackay \cite[Theorem 1.4]{Mackay2011} showed that for a Gazouras-Lalley carpet $E$, if the projection of $E$ onto the horizontal coordinate is not a single line segment (in which case it is a porous set and hence, (H2'') is not satisfied), then $\cdima E = 0$. For more general self-affine carpets, this is not true anymore. If a self-affine carpet $E$ satisfies the SSC, \ref{H:horizontal}, and (H2''), then, by Remark \ref{rem:projections} and Theorem \ref{thm:conformal_dimensio}, $E$ is minimal for the conformal Assouad dimension. Let us next define a self-affine carpet which satisfies these assumptions but does not satisfy \ref{H:projection}.

  Let $\Phi=\{\fii_i\}_{i=1}^4$ be such that
  \begin{align*}
  \fii_1(x_1,x_2) &= (\tfrac{3}{5} - \tfrac{3}{5}x_1 , \tfrac{1}{3}x_2), \\ 
  \fii_2(x_1,x_2) &= (\tfrac{3}{5}x_1 , \tfrac{1}{3}x_2) + (\tfrac{2}{5},0), \\
  \fii_3(x_1,x_2) &= (\tfrac{1}{5} - \tfrac{1}{5}x_1 , \tfrac{1}{6}x_2) + (\tfrac{4}{5},0), \\ 
  \fii_4(x_1,x_2) &= (\tfrac{1}{5}x_1 , \tfrac{1}{6}x_2) + (\tfrac{4}{5},\tfrac{2}{5}).
  \end{align*}
  All the maps map the rectangle $X=[0,1]\times[0,\frac{3}{5}]$ into itself; see Figure \ref{fig:projectiongap}. Observe that the maps $\fii_1$ and $\fii_3$ contain a reflection in the first coordinate. We see that each vertical line that intersects $\fii_i(X)$ for some $i$ also intersects $\fii_{jk}(X)$ for some $j$ and $k$. Thus the condition (H2'') is satisfied. By Remark \ref{rem:projections}, the vertical projection of $E$ equals to the union of the projections of $\fii_i(X)$, which in this case, is a union of two disjoint line segments.
\end{example}

\bibliographystyle{abbrv}
\bibliography{Bibliography}

\begin{thebibliography}{10}

\bibitem{BandtKaenmaki2013}
C.~Bandt and A.~K{\"a}enm{\"a}ki.
\newblock {Local structure of self-affine sets}.
\newblock {\em Ergodic Theory Dynam. Systems}, 33(5):1326--1337, 2013.

\bibitem{BishopPeres2016}
C.~J. Bishop and Y.~Peres.
\newblock {Fractals in Probability and Analysis}.
\newblock Unpublished, 2016.

\bibitem{BonkMerenkov2013}
M.~Bonk and S.~Merenkov.
\newblock {Quasisymmetric rigidity of square {S}ierpi{\'n}ski carpets}.
\newblock {\em Ann. of Math. (2)}, 177(2):591--643, 2013.

\bibitem{DavidSemmes1997}
G.~David and S.~Semmes.
\newblock {\em {Fractured fractals and broken dreams}}, volume~7 of {\em
  {Oxford Lecture Series in Mathematics and its Applications}}.
\newblock The Clarendon Press, Oxford University Press, New York, 1997.
\newblock Self-similar geometry through metric and measure.

\bibitem{Heinonen2001}
J.~Heinonen.
\newblock {\em {Lectures on analysis on metric spaces}}.
\newblock {Universitext}. Springer-Verlag, New York, 2001.

\bibitem{Hutchinson1981}
J.~E. Hutchinson.
\newblock {Fractals and self-similarity}.
\newblock {\em Indiana Univ. Math. J.}, 30(5):713--747, 1981.

\bibitem{KaenmakiKoivusaloRossi2015}
A.~K{\"a}enm{\"a}ki, H.~Koivusalo, and E.~Rossi.
\newblock {Self-affine sets with fibred tangents}.
\newblock Ergodic Theory and Dynam. Systems, to appear, available at
  arXiv:1505.00958.

\bibitem{KaenmakiRossi2015}
A.~K{\"a}enm{\"a}ki and E.~Rossi.
\newblock {Weak separation condition, Assouad dimension, and Furstenberg
  homogenity}.
\newblock {\em Ann. Acad. Sci. Fenn. Math.}, 41:465--490, 2016.

\bibitem{LeDonneRajala2015}
E.~Le~Donne and T.~Rajala.
\newblock Assouad dimension, {N}agata dimension, and uniformly close metric
  tangents.
\newblock {\em Indiana Univ. Math. J.}, 64(1):21--54, 2015.

\bibitem{LeDonneXie2015}
E.~{Le Donne} and X.~Xie.
\newblock {Rigidity of fiber-preserving quasisymmetric maps}.
\newblock Rev. Mat. Iberoam., to appear, available at arXiv:1501.02391.

\bibitem{Mackay2011}
J.~M. Mackay.
\newblock {Assouad dimension of self-affine carpets}.
\newblock {\em Conform. Geom. Dyn.}, 15:177--187, 2011.

\bibitem{MackayTyson2010}
J.~M. Mackay and J.~T. Tyson.
\newblock {\em {Conformal dimension}}, volume~54 of {\em {University Lecture
  Series}}.
\newblock American Mathematical Society, Providence, RI, 2010.
\newblock Theory and application.

\bibitem{TukiaVaisala1980}
P.~Tukia and J.~V{\"a}is{\"a}l{\"a}.
\newblock Quasisymmetric embeddings of metric spaces.
\newblock {\em Ann. Acad. Sci. Fenn. Ser. A I Math.}, 5(1):97--114, 1980.

\bibitem{WangWenZhu2010}
X.~Wang, S.~Wen, and C.~Zhu.
\newblock {Quasisymmetric equivalence of self-similar sets}.
\newblock {\em J. Math. Anal. Appl.}, 365(1):254--258, 2010.

\end{thebibliography}

\end{document}